\title{Quantum computing with anyons: an $F$-matrix and braid calculator}
\author[W.~Aboumrad]{Willie Aboumrad}
\address[W.~Aboumrad]{The Institute for Computational and Mathematical Engineering (ICME) at Stanford University}
\email{willieab@stanford.edu}
\urladdr{https://web.stanford.edu/~willieab}
\keywords{F-matrix, pentagon equations, anyons, quantum computing, fusion rings, fusion categories}
\date{}
\begin{document}
\maketitle

\begin{abstract}
	We introduce a pentagon equation solver, available as part of {\sc SageMath}, and use it to construct braid group representations associated to certain anyon systems. We recall the category-theoretic framework for topological quantum computation to explain how these representations describe the sets of logical gates available to an anyonic quantum computer for information processing. In doing so, we avoid venturing deep into topological or conformal quantum field theory. Instead, we present anyons abstractly as sets of labels together with a collection of data satisfying a number of axioms, including the pentagon and hexagon equations, and explain how these data characterize ribbon fusion categories (RFCs). In the language of RFCs, our solver can produce $F$-matrices for anyon systems corresponding to multiplicity-free fusion rings arising in connection with the representation theory of quantum groups associated to simple Lie algebras with deformation parameter a root of unity. 
\end{abstract}

\section{Introduction}
\label{anyonic qc ch}
\longer{In this \chorpaper\ we introduce the anyonic computing model and discuss the capabilities of the \texttt{FusionRing} class available in {\sc SageMath} for simulating anyonic quantum computers.}

Two decades ago Freedman and Kitaev proposed a model for topological quantum computation (TQC) featuring a major advantage over the mainstream qubit model. Their model encodes information in global properties of topological phases of matter that are invariant under local deformations. The encoded information is intrinsically protected at the physical hardware level so the TQC model boasts a much higher error threshold. The series of papers \cite{fkw,flw,fklw,kitaev_2003} introduced and advanced the TQC model; \Cref{rfc framework for tqc} recalls the basics from an algebraic standpoint.

Topological quantum computers are built on \textit{anyons}, which are point-like quasiparticles believed to exist in certain $2$-dimensional materials. They can be understood as primary fields in Wess-Zumino-Witten conformal field theories \cite[Chapters~15-17]{francesco_mathieu_senechal_2011} and they can also be described in terms of Chern-Simons theory \cite{iengo_1992}. In experimental physics, anyons are expected to appear in fractional quantum Hall liquids and other superconductors; \cite{nayak_simon_stern_freedman} surveys these and other approaches.

Unlike bosons or fermions, \textit{any}ons support exotic particle statistics; in some sense, \textit{any}thing can happen because anyons are constrained to two spatial dimensions. In a $(2 + 1)$-dimensional spacetime, not all loops are topologically equivalent: the fundamental group of the configuration space over $\mathbb{R}^2$ is the Artin braid group. This means anyonic statistics are described by (possibly high-dimensional) representations of the braid group. 

Understanding these representations is tantamount to understanding the computational power of an anyonic quantum computer because anyon exchanges are the physical process driving information processing in the TQC model. This \chorpaper\ explains how to obtain these representations and how to construct their associated matrices explicitly using {\sc SageMath}. In particular, \Cref{numerical braids} features explicit braid computations.

We begin by presenting anyon systems abstractly in \Cref{anyon system} as sets of labels together with complex numbers satisfying a number of axioms. The label set is equipped with commutative associative addition and multiplication operations with integral structure coefficients making it into a gadget known as a \textit{fusion ring} or \textit{Verlinde algebra}. Fusion rings are well-known and well-studied, e.g., in \cite{fuchs,feingold_2004}, and they arise in various areas of modern physics and mathematics, where they describe the possible couplings amongst three objects of some given class. For example, fusion rings describe: 
\begin{itemize}
	\item The decomposition into irreducibles of tensor products of finite-dimensional representations of semisimple complex Lie algebras, finite groups, or associative bialgebras;
	\item Truncated tensor products of unitary representations of quantum groups with deformation parameter at a root of unity 	\cite{alvarez_gomez_sierra_1990,chari_pressley_1994};
	\item The composition of superselection sectors in the $C^*$-algebraic approach to relativistic quantum field theory \cite{frs_89};
	\item The coupling of primary fields of $\mathcal{W}$-algebras in two-dimensional conformal field theory \cite{dijkgraaf_verlinde_verlinde_1988};
	\item Operator products in topological quantum field theory \cite[Chapter~6]{francesco_mathieu_senechal_2011}.
\end{itemize}
With the exception of the last one, the examples above appeared in \cite{fuchs}. 

In addition to a label set that defines a fusion ring, an anyon system consists of three sets of complex numbers: a $6j$-system or $F$-matrix, a $3j$-system or $R$-matrix, and a pivotal structure. 

We stress this data-driven approach to anyons for two reasons: on one hand, it allows us to readily develop a category-theoretic algebraic framework for TQC without going deep into conformal or topological quantum field theory;  on the other, it allows us to leverage the \texttt{FusionRing} class in {\sc SageMath} to perform explicit computations in many interesting cases. The \texttt{FusionRing} class models the data defining anyon systems that arise in connection with the finite-dimensional representation theory of a quantum group associated to a simple complex Lie algebra with deformation parameter a root of unity. The \texttt{FusionRing} class can compute $R$-, and $S$-matrices, twists, quantum dimensions, and invariants such as the Virasoro central charge and global topological order, amongst others; detailed documentation may be found at \url{https://doc.sagemath.org/html/en/reference/combinat/sage/combinat/root_system/fusion_ring.html}. Moreover, when the anyon system is multiplicity-free, the \texttt{FusionRing} class can obtain an associated $F$-matrix. The class was implemented by the author together with Daniel Bump and Travis Scrimshaw. 

With the notion of anyon systems in hand, we argue in \Cref{abcs of tqc} that certain ribbon fusion categories (RFCs) capture the dynamics of anyon systems.  \ddichotomy{\Cref{prelude} covers}{We have supplied a companion article \cite{willie_rfc} covering} the necessary background on category theory. While this material is well-known and may be found in various sources, e.g., \cite{baki,etingof_gelaki_nikshych_ostrik_2017}, our companion \chorpaper\ aims to be concise and presents only the necessary notions. 

\Cref{braiding via frt mats} then computes the desired braid representations by leveraging the categorical framework to obtain formulas expressing braid generators in terms of $F$- and $R$-matrices. \Cref{general braiding formula} collects these formulas.

In practice, however, we do not have direct access to all the data specifying an anyon system; instead, we typically deal with the corresponding RFC. In particular, in this \chorpaper\ we focus on RFCs arising in connection with the representation theory of quantum groups. In this case, we may easily obtain the $R$-matrix and the pivotal structure of the corresponding anyon system using quantum group theory; for details, refer to the documentation of the \texttt{FusionRing.twist} and \texttt{FusionRing.r\_matrix} methods. Obtaining an $F$-matrix, however, requires some work: we must solve the Pentagon Equations \eqref{pent eqn}. 

These polynomial relations enforce coherence for the monoidal structure of the RFC. Ocneanu rigidity guarantees there are only finitely many solutions, so we can always solve the system using Groebner basis methods, \textit{in principle} \cite{buchberger_1976}. 

In practice, however, these methods are prohibitively slow. The algorithms scale at least exponentially, in both space and time, as a function of the number of variables, equations, degrees of the equations, and the number of solutions to the equations; in an anyon system of $N$ labels, the number of variables in the pentagon system scales like $N^6$ while the number of equations scales like $N^9$. Thus ``solving the pentagons turns out to be a difficult task (even with the help of computers)'' \cite[Section~2.4]{trebst_troyer_wang_ludwig_2008}.

In some sense completing this task is more of an art than a science and engineering takes a leading role. For this we implemented a solver in {\sc SageMath} capable of obtaining an $F$-matrix associated to a given multiplicity-free \texttt{FusionRing} object. In fact there are two solvers available, but here we focus on the orthogonal solver. The solver is managed by the \texttt{FMatrix} class, and its open-source implementation is currently available in the Trac server at \url{https://trac.sagemath.org/ticket/30423}. The code may be used upon building the {\sc SageMath} development branch, as explained here: \url{https://doc.sagemath.org/html/en/developer/walk_through.html}. The \texttt{FMatrix} class is currently set to merge into the stable {\sc SageMath} $9.8$ release.

Although our solver ultimately relies on Groebner basis computations, it implements a variety of techniques to ensure the calculation remains manageable in many interesting cases. In particular, the solver is at least capable of producing an $F$-matrix associated with any multiplicity-free \texttt{FusionRing} with a dozen generators using the computational resources of a laptop. Since the solver relies on parallel computations, it would likely benefit from access to more processing cores.

An important feature is that we exploit the Hexagon Equations \eqref{hex eqn}, which enforce consistency between the associator and the braiding on an RFC. The hexagons are much more \textit{localized} than the pentagons: the equations graph they induce consists of many relatively small connected components that may be processed independently and in parallel. In addition, the hexagons help determine a field containing the $F$-matrix as they feature $3j$-symbols in the form of cyclotomic coefficients. \Cref{solver details} discusses this and other features of our implementation in further detail, including our custom {\sc Cython} arithmetic engine for sparse polynomials, the concurrent programming involved, and the shared data structures needed to support it.

We note that some attempts at solving the pentagon system have appeared in the literature and some solutions exist for various special cases. For example, see \cite{hagge,rowell_stong_wang_2009,trebst_troyer_wang_ludwig_2008,bonderson,cui_wang_2015,f_symbols_h3}. However, to the best of our knowledge, including the hexagons is a novelty of our solver. In addition, our solver implementation seems to be the first open-source one available, and the first one that can handle a variety of fusion rings in a unified framework.

\subsection*{Acknowledgements} 
The author is greatly indebted to Daniel Bump and Travis Scrimshaw for their collaboration, hard work, patience, and feedback in developing the \texttt{FusionRing} code. The author would also like to thank Eric Rowell for useful discussions and guidance, especially regarding a formula for computing certain $R$-symbols. In addition, the author thanks David Roe and Julian Ruth, amongst others, for useful discussions about {\sc SageMath} implementation details during the {\sc SageMath} Days Conference in May 2021.

\section{ABCs of TQC: Anyons, Braids, and Categories for Topological Quantum Computing}
\label{abcs of tqc}
In this section we define anyons systems and explain how to realize anyonic quantum computation using braiding operators acting on fusion spaces. We show that anyons can be interpreted as simple objects in certain RFCs, in the sense of \crossrefrfc{rfc defn}. Technically, anyon systems define unitary modular tensor categories (UMTCs), which are RFCs equipped with a collection of Hermitian forms on hom-sets that is compatible with the associator and whose braiding satisfies a non-degeneracy condition. We attempt to avoid venturing too far into the weeds of category theory and instead focus on the structure that is necessary for the anyonic quantum computation model. In addition, we only consider \textit{multiplicity-free} anyon systems.

Without further ado, we begin with a definition.

\begin{dfn}\label[defn]{anyon system}
	An \textit{anyon system} is a finite set $\ccat$ of labels together with the following data. 
	\begin{enumerate}[(i)]
		\item The set $\ccat$ is equipped with the structure of a \textit{fusion ring}, which means there is a commutative associative \textit{fusion rule} $\times$ and integral structure constants $N^{ab}_c \geq 0$ such that for every $(a, b) \in \ccat^2$, 
		\begin{equation}\label{fusion rule prototype}
			a \times b = \sum_{c \in \ccat} N^{ab}_c c.
		\end{equation}
		In this work, we only consider \textit{multiplicity-free} fusion rules: every $N^{ab}_c \in \{0, 1\}$. 
		
		A triple $(a, b, c) \in \ccat^3$ is \textit{admissible} if $N^{ab}_c \neq 0$. We will sometimes write $c \in \{a \times b\}$ to mean $(a, b, c)$ is admissible. A sextuple $(a, b, c, d, e, f)$ is \textit{admissible} if $e \in \{a \times b\}$, $d \in \{e \times c\}$, and $f \in \{b \times c\}$.
		
		\item There is a unique \textit{trivial} or \textit{vacuum} label $\mathbf{1} \in \ccat$ such that for every $a \in \ccat$,
		$$a \times \mathbf{1} = a = \mathbf{1} \times a.$$
		
		\item The set $\ccat$ is equipped with an involutive (ring) automorphism $* \colon \ccat \to \ccat$, called a \textit{conjugation} or \textit{duality operator}, such that for every $a \in \ccat$, 	
		$$
			\mathbf{1} \in \{a \times b\} 
			\quad \text{if and only if} \quad 
			b = a^*.
		$$
		
		\item There is an \textit{$F$-matrix} or \textit{$6j$-system} $F\colon \ccat^6 \to \mathbb{C}$ satisfying the following conditions. Write $F^{abc}_{d; \, ef} = \big[F^{abc}_d \big]_{e f}$ for $F(a, b, c, d, e, f)$ and let $F^{abc}_d$ denote the matrix with $(e, f)$-entry $F^{abc}_{d; \, ef}$. 
		\begin{enumerate}
			\item \textit{(Admissibility.)} If $(a, b, c, d, e, f)$ is not admissible, then $F^{abc}_{d; \, ef} = 0$. 
			\item \textit{(Pentagon Axiom.)} For every nonuple $(a, b, c, d, e, f, g, k, \ell) \in \ccat^9$, 
			\begin{align}\label{pent eqn}
				\big[F^{fcd}_e \big]_{g \ell} \big[ F^{ab\ell}_d\big]_{fk} 
				&= \sum_h \big[ F^{abc}_g \big]_{fh} \big[ F^{ahd}_e \big]_{gk} \big[ F^{bcd}_k \big]_{h\ell}.
			\end{align}
			\item \textit{(Triangle Axiom.)} For every quadruple $(a, b, c, d) \in \ccat^4$ with $\mathbf{1} \in \{a, b, c\}$,
			$$F^{abc}_d = I.$$
			\item \textit{(Rigidity.)} For any $a \in \ccat$, 
			$\big[F^{a, a^*, a}_{a}\big]_{\mathbf{1} \mathbf{1}} 
			= \big[\big(F^{a^*, a, a^*}_{a^*}\big)^{-1}\big]_{\mathbf{1} \mathbf{1}}$.
		\end{enumerate} 

		\item There is a \textit{braiding}, \textit{$3j$-system}, or collection of \textit{$R$-symbols} $R \colon \ccat^3 \to \mathbb{C}$ written as $R(a, b, c) = R^{ab}_c$ satisfying the following conditions.
			\begin{enumerate}
			\item If $(a, b, c) \in \ccat^3$ is admissible, then $R^{ab}_c \neq 0$.
			\item \textit{(Hexagon Axiom.)} For every sextuple $(a, b, c, d, e, g) \in \ccat^6$, 
			\begin{equation}\label{hex eqn}
				\left(R^{ac}_e\right)^{\pm 1} \big[ F^{acb}_d \big]_{eg} \left(R^{bc}_g\right)^{\pm 1} 
				= \sum_f \big[ F^{cab}_d \big]_{ef} \, \left(R^{fc}_d\right)^{\pm 1} \, \big[ F^{abc}_d \big]_{fg}.
			\end{equation}
			\end{enumerate}

		\item There is a compatible \textit{pivotal structure} $t\colon \ccat \to \{\pm 1\}$ such that $t_{\mathbf{1}} = 1$ and for every admissible triple $(a, b, c) \in \ccat^3$,
		$$
			t_a t_b t_c = 
				\big[F^{a, b, c^*}_{\mathbf{1}}\big]_{a^*, c} 
				\big[F^{b, c^*, a}_{\mathbf{1}}\big]_{a^*, a} 
				\big[F^{c^*, a, b}_{\mathbf{1}}\big]_{b^*, b} 
		$$
	\end{enumerate}
\end{dfn}
\begin{rmk}
	The \textit{multiplicity-free} assumption is widely adopted in the literature. 
	An important implication of this hypothesis is that all braiding operators are semi-simple. Fortuitously, many anyon systems that are physically relevant or universal for computation are indeed multiplicity-free. For instance, the multiplicity-free $SO(n)_2$ system considered in \cite{cui_wang_2015} is physically relevant; the Fibonacci anyon system studied in \cite{trebst_troyer_wang_ludwig_2008} is universal for computation. The so-called \textit{metaplectic anyons} in $SO(n)_2$ are related to the representation theory discussed in \crossrefglnch\ and \crossrefsonch; in particular, \crossrefson{uqodn uqprime duality} describes the fusion of the spin object $S \in SO(n)_2$. No explicit $6j$-system is known for a non-multiplicity-free theory,\footnote{Private communication with Eric Rowell.} so not much can be said about such anyons with regards to the quantum computing application. 
\end{rmk}
	
A few remarks are in order. Observe that a fusion rule expresses the fusion of two anyons as some sort of super-position over all possible particle types. Commutativity  is equivalent to the identity
$$N^{ab}_c = N^{ba}_c,$$
while associativity is expressed by
$$\sum_{d \in \ccat} N^{ab}_d N^{dc}_e = \sum_{d \in \ccat} N^{bc}_d N^{ad}_e.$$
The conjugation axioms imply that $C_{ab} = N^{ab}_{\mathbf{1}}$ is an order $2$ permutation matrix. They are equivalent to 
$$N^{ab}_{\mathbf{1}} = \delta_{a^*, b} = N^{ba}_{\mathbf{1}}.$$
Of course a fusion ring \textit{isomorphism} is a bijective set function $\varphi\colon\ccat \to \ccat'$ preserving the fusion rules:
$$N^{\varphi(a), \varphi(b)}_{\varphi(c)} = N^{ab}_c, \qquad a, b, c \in \ccat.$$

\subsection{Anyon systems are RFCs}
\label{anyons are rfcs}

In this work we take an algebraic point of view: up to equivalence, anyon systems are in one-to-one correspondence with RFCs. 

To begin, we construct an RFC using the data defining a given anyon system $\ccat$. By slight abuse of notation, we denote this RFC also by $\ccat$. The simple objects in $\ccat$ are the labels in the anyon system. The semisimple structure is supplied by the fusion rules: the category $\ccat$ is a strict tensor category whose Grothendieck ring matches the fusion ring exactly. The Pentagon Axiom \eqref{pent eqn} enforces a compatibility that is equivalent to the MacLane coherence axiom, illustrated in \Cref{pentagon}, required to hold in any monoidal category. The $R$-symbols furnish the braiding on $\ccat$ and the pivotal structure is used to define a family of twists. Much like the Pentagon Axiom, the Hexagon Axiom \eqref{hex eqn} enforces a compatibility equivalent to the Hexagon \crossrefrfc{hexagon1,hexagon2} that must hold in any braided cateogry. To see this, we must understand the hom-sets of $\ccat$.

The category $\ccat$ is enriched over $\mathbb{C}$ and the morphism spaces are spanned by oriented framed tangle diagrams that allow for trivalent vertices to account for fusion. We construct the hom-spaces inductively as follows. To each fusion product $a \times b$ we assign \textit{fusion spaces} $\Hom(a \times b, c) = V^{ab}_c$ of dimension $N^{ab}_c$ for every $c \in \ccat$. Since we assume our anyon system is multiplicity-free, each fusion space $V^{ab}_c$ is at most one-dimensional. The \textit{splitting space} $\Hom(c, a \times b) = V^c_{ab}$ is the dual of $V^{ab}_c$. For each admissible triple $(a, b, c) \in \ccat$, we choose a basis of $V^{ab}_c$ and label its single non-zero vector by the following \textit{fusion tree}, with all strands oriented downwards:
\begin{equation*}
\begin{tikzpicture}[scale=0.4]
	\fusiontree{2}{{0,0}}
	\outerlab{{a,b}}{c}
	\node[scale=2.] at (8, -1.5) {$\in V^{ab}_c = \Hom(a \times b, c)$};
\end{tikzpicture}
\end{equation*}
Whenever $(a, b, c)$ is admissible we label the basis of the dual space $\Hom(c, a \times b)$ using the same fusion tree but with all strands pointing upwards. 

To the fusion of $m$ anyons $a_1 \times \cdots \times a_m$, we assign fusion spaces $\Hom(a_1 \times \cdots \times a_m, c) = V^{a_1,\ldots, a_m}_c$ that decompose into tensor products of fusion spaces of two anyons by matching intermediate indices:
\begin{equation}\label{m anyon decomp}
	V^{a_1, \ldots, a_m}_c 
	\cong \bigoplus_{e_1, \ldots, e_{m-2}} 
	V^{a_1 a_2}_{e_1} \otimes V^{e_1 a_3}_{e_2} 
	\otimes \cdots \otimes 
	V^{e_{m-3}, a_{m-1}}_{e_{m-2}} \otimes V^{e_{m-2}, a_m}_c.	
\end{equation}
This means there is a basis of $\Hom(a_1 \times \cdots \times a_m, b)$ enumerated by \textit{admissible} fusion trees: those whose every trivalent vertex is admissible. \Cref{std basis diagram} illustrates the \textit{standard basis} of $\Hom(a_1 \times \cdots \times a_m, b)$.
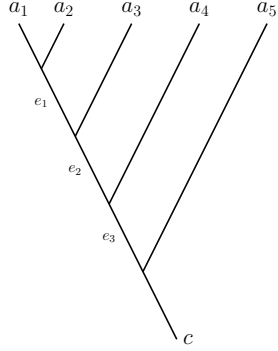
\begin{figure}[!h]
\begin{center}
\begin{tikzpicture}[scale=0.3]
	\stdtree{8}{{0,0}}
\end{tikzpicture}
\caption{The \textit{standard basis} of $\Hom(a_1 \times \cdots \times a_m, b) = V^{a_1, \ldots, a_m}_b$ is labeled by fusion trees as above. Here $m = 5$.}
\label{std basis diagram}
\end{center}
\end{figure}
Since the fusion rule is associative, many decompositions as in \Cref{m anyon decomp} are possible. For instance, if $m = 3$, then 
\begin{equation}\label{three anyon decomp}
V^{abc}_d \cong \bigoplus_e V^{ab}_e \otimes V^{ec}_d \cong \bigoplus_f V^{af}_d \otimes V^{bc}_f.	
\end{equation}

The Pentagon Axiom \eqref{pent eqn} ensures all possible decompositions of $\Hom(a_1 \times \cdots \times a_m, b)$ into tensor products are equivalent. This can be seen by using the $F$-matrix to define a family of isomorphisms between the two decompositions of $V^{abc}_d$ for every quadruple $(a, b, c, d) \in \ccat^4$. In particular, the $F$-matrix determines the change of basis isomorphism $\big[ F^{abc}_d \big]\colon \Hom\left((a \times b) \times c, d\right) \rightarrow \Hom\left(a \times (b \times c),  d\right)$ between the decompositions in \Cref{three anyon decomp}. This change of basis is known as an \textit{$F$-move} and it is depicted in \Cref{f move}; in bra-ket notation, it is written as
\[
	\ket{a,b; e} \ket{e,c; d} 
	= \sum_f \big[ F^{abc}_d \big]_{ef} \ket{a,f;d}\ket{b,c; f}.
\]
\begin{figure}[!h]
\begin{center}
\begin{tikzpicture}[scale=0.35]	
	\coordinate (tlc) at (0,3);
	\fusiontree{3}{tlc}
	\outerlab{{a,b,c}}{d}
	\innerlab{{e}}
		
	\draw[-latex] (6, 0) -- ++(3,0);
	\node[scale=1.75] at (12,-0.25) {
		$\displaystyle \sum_f F^{abc}_{d; \, ef}$
	};
	
	\coordinate (tlc) at (13.5,3);
	\fusiontree{3}{tlc}
	\outerlab{{a,b,c}}{d}
	\innerlab{{\relax}}
	\coordinate (p) at ($(3.1,-3) + (tlc)$);
	\node at (p) [replace] {$f$};
	\draw[very thick, white] ($(2,0) + (tlc)$) -- ++($\ss*(mop)$);
	\draw[semithick] ($(2,0) + (tlc)$) -- ++($\ss*(m)$);
\end{tikzpicture}
\end{center}	
\caption{The $F$-move is an isomorphism $\bigoplus_e V^{ab}_e \otimes V^{ec}_d \cong \Hom\left(a \times b \times c, d\right) \cong \bigoplus_f V^{af}_d \otimes V^{bc}_f$.}
\label{f move}
\end{figure}
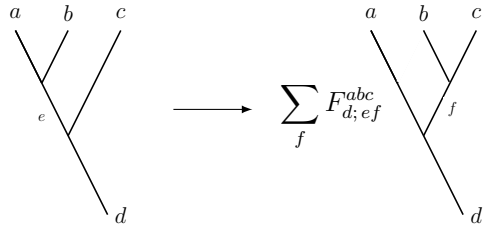

The Pentagon Axiom \eqref{pent eqn} enforces a consistency that ensures any two sequences of $F$-moves starting and ending in the same decomposition yield the same result; explicitly, \Cref{pent eqn} requires the equivalence of the two sequences illustrated in \Cref{tree pentagon}. The diagram in \Cref{tree pentagon} is the image of that in \Cref{pentagon} under the contravariant functor $\Hom(\cdot, e)$. Thus the MacLane Coherence Theorem \cite[Section~VII.2]{maclane_1988} implies the equivalence of all sequences of $F$-moves starting and ending in the same decomposition: for every diagram at the fusion space level, there is a corresponding diagram at the level of $\mathrm{Ob}(\ccat)$ whose commutativity is guaranteed by a pentagon like the one in \Cref{pentagon}. Note the MacLane Coherence Theorem applies to $\ccat$ automatically because it is a \textit{strict} monoidal category.

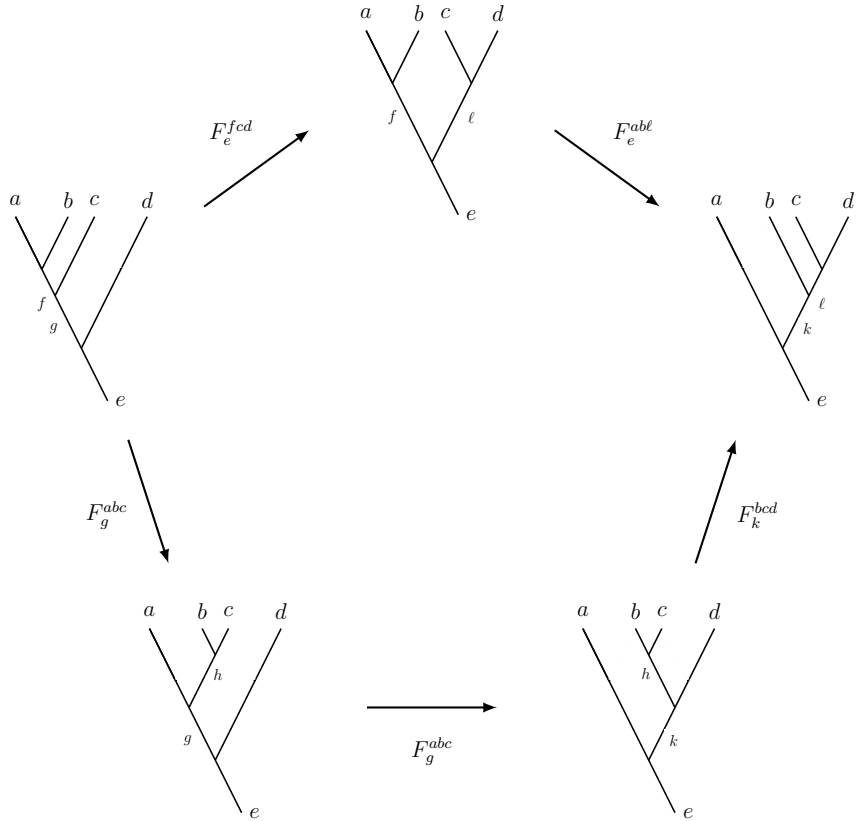
\begin{figure}[!h]
\begin{center}
\begin{tikzpicture}[scale=0.35]
	\tikzmath{\apo = 14;}
	\coordinate (shift) at (-2.5, 3);
	
	\coordinate (v) at ($(90:\apo) + (shift) - 4*(\deltay)$);
	\fusiontree{4}{v}
	\outerlab{{a, b, c, d}}{e}{v}
	\innerlab{{f,\ell}}
	
	\coordinate (v) at ($(90 + 72:\apo) + (shift)$);
	\fusiontree{4}{v}
	\outerlab{{a, b, c, d}}{e}
	\innerlab{{f,}}
	\node at ($(1.45, -4.25) + (v)$) [replace] {$g$};
	\draw[very thick, white] ($(3, 0) + (v)$) -- ++($\ss*(m)$);
	\draw[semithick] ($(3,0) + (v)$) -- ++($1.5*(mop)$);
	
	\coordinate (v) at ($(90 + 2*72:\apo) + (shift)$);
	\fusiontree{4}{v}
	\outerlab{{a, b, c, d}}{e}
	\innerlab{{,}}
	\node at ($(1.45, -4.25) + (v)$) [replace] {$g$};
	\node at ($(2.6, -1.75) + (v)$) [replace] {$h$};
	\draw[very thick, white] ($(3, 0) + (v)$) -- ++($\ss*(m)$);
	\draw[semithick] ($(3, 0) + (v)$) -- ++($1.5*(mop)$);
	\draw[very thick, white] ($(2, 0) + (v)$) -- ++($\ss*(mop)$);
	\draw[semithick] ($(2, 0) + (v)$) -- ++($0.5*(m)$);

	\coordinate (v) at ($(90 + 3*72:\apo) + (shift)$);
	\fusiontree{4}{v}
	\outerlab{{a, b, c, d}}{e}
	\innerlab{{,}}
	\node at ($(2.4, -1.7) + (v)$) [replace] {$h$};
	\node at ($(3.45, -4.25) + (v)$) [replace] {$k$};
	\draw[very thick, white] ($(2, 0) + (v)$) -- ++($\ss*(mop)$);
	\draw[very thick, white] ($(3, 0) + (v)$) -- ++($\ss*(m)$);
	\draw[semithick] ($(2, 0) + (v)$) -- ++($1.5*(m)$);
	\draw[semithick] ($(3, 0) + (v)$) -- ++($0.5*(mop)$);
		
	\coordinate (v) at ($(90 + 4*72:\apo) + (shift)$);
	\fusiontree{4}{v}
	\outerlab{{a, b, c, d}}{e}
	\innerlab{{,\ell}}
	\node at ($(3.45, -4.25) + (v)$) [replace] {$k$};
	\draw[very thick, white] ($(2, 0) + (v)$) -- ++($\ss*(mop)$);
	\draw[semithick] ($(2, 0) + (v)$) -- ($(3.5, -3) + (v)$);
	
	\tikzmath{\start = 0.35; \tar = 1 - \start;}
	\foreach[count=\next] \curr in {0,...,4} {
		\pgfmathsetmacro\ti{90 + int(\curr)*72}
		\pgfmathsetmacro\tf{90 + int(\next)*72}
		\coordinate (vi) at ($(\ti:\apo)!\start!(\tf:\apo)$);
		\coordinate (vf) at ($(\ti:\apo)!\tar!(\tf:\apo)$);
		\pgfmathparse{int(mod(\curr,4))}
		\ifnum\pgfmathresult=0
			\draw[thick, latex-] (vi) -- (vf);
		\else
			\draw[thick, -latex] (vi) -- (vf);
		\fi
	}
	
	\tikzmath{\apo = \apo-1; \factor=1.45;}
	\node[scale=\factor] at ($(90 + 0.5*72:\apo)$) {$F^{fcd}_e$};
	\node[scale=\factor] at ($(90 + 1.5*72:\apo)$) {$F^{abc}_g$};
	\node[scale=\factor] at ($(0, -\apo)$) {$F^{abc}_g$};
	\node[scale=\factor] at ($(90 + 3.5*72:\apo)$) {$F^{bcd}_k$};
	\node[scale=\factor] at ($(90 + 4.5*72:\apo)$) {$F^{ab\ell}_e$};
\end{tikzpicture}
\caption{The Pentagon Relations guarantee that different sequences of $F$-moves starting and ending in the same fusion basis give the same result. The Pentagon Axiom \eqref{pent eqn} are equivalent to the commutativity of this diagram.}
\label{tree pentagon}
\end{center}	
\end{figure}

\begin{figure}[!h]
\begin{center}
\scalebox{0.45}{
\begin{tikzpicture}[line width=1.1]
	\tikzmath{\apo = 7; \ss=1.7;}
	\node[scale=\ss] at ($(90:\apo)$) {$(a \times b) \times (c \times d)$};
	\node[scale=\ss] at ($(90 + 1*72:\apo)$) {$\big((a \times b) \times c\big) \times d$};
	\node[scale=\ss] at ($(90 + 2*72:\apo)$) {$\big(a \times ( b \times c)\big) \times d\quad\quad$};
	\node[scale=\ss] at ($(90 + 3*72:\apo)$) {$\quad\quad a \times \big((b \times c) \times d\big)$};
	\node[scale=\ss] at ($(90 + 4*72:\apo)$) {$a \times \big(b \times (c \times d)\big) $};
		
	\tikzmath{\start = 0.3; \tar = 1 - \start;}
	\foreach[count=\next] \curr in {0,...,4} {
		\pgfmathsetmacro\ti{90 + int(\curr)*72}
		\pgfmathsetmacro\tf{90 + int(\next)*72}
		\coordinate (vi) at ($(\ti:\apo)!\start!(\tf:\apo)$);
		\coordinate (vf) at ($(\ti:\apo)!\tar!(\tf:\apo)$);
		\pgfmathparse{int(mod(\curr,4))}
		\ifnum\pgfmathresult=0
			\draw[{Latex[length=3mm]}-] (vi) -- (vf);
		\else
			\draw[-{Latex[length=3mm]}] (vi) -- (vf);
		\fi
	}
	
	\tikzmath{\apo = \apo; \ss = 0.8*\ss; }
\end{tikzpicture}}
\end{center}
\caption{The Pentagon Axiom \eqref{pent eqn} is equivalent to the commutativity of the image of this diagram under $\Hom(\cdot, e)$.}
\label{pentagon}
\end{figure}
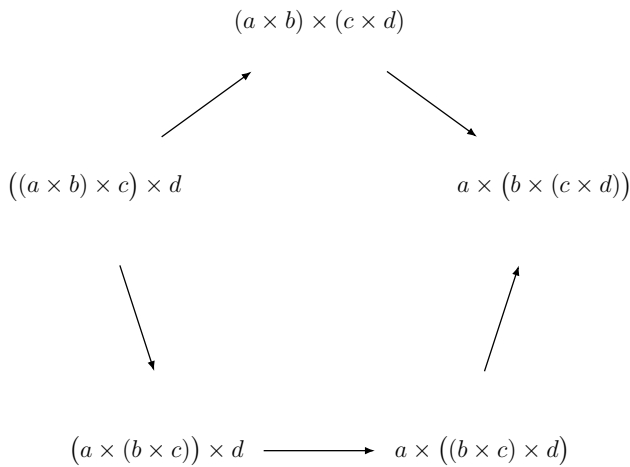

Since the morphism spaces are spanned by certain tangles, we may construct a braiding on $\ccat$ using the $3j$-system as follows. For starters, we define the \textit{$R$-move} on $\Hom(a \times b, c)$, for any admissible triple $(a, b, c) \in \ccat^3$, by identifying the following tangles:
\begin{equation*}
\begin{tikzpicture}[scale=0.4]
	\coordinate (o);
	\braidedtree{2}{1}{o}
	\outerlab{{,}}{c}
	\node at ($(0,2) + (\deltay)$) [replace] {\Large$b$}; 

	\node[scale=2] at (4, -1) {$=$};
	\node[scale=2] at (6, -1) {$R^{ab}_c$};

	\fusiontree{2}{{7.5,0.75}}
	\outerlab{{a,b}}{c}
	
	\node[scale=2] at (10.5, -1) {,};
\end{tikzpicture}
\qquad\qquad\qquad
\begin{tikzpicture}[scale=0.4]
	\coordinate (o);
	\fusiontree{2}{o}
	\coordinate (lcoord) at ($1.5*(0.65, 0) - (1,0) + (o)$);
	\coordinate (rcoord) at ($(lcoord) + (step)$);
	\fill[color=white] ($(lcoord) + (\deltay)$) circle (1em);
	\node at ($(lcoord) + (0,2) + (\deltay)$) {\Large $a$};
	\fill[color=white] ($(rcoord) + (\deltay)$) circle (1em);
	\node at ($(rcoord) + (0,2) + (\deltay)$) {\Large $a$};
	\draw[semithick] (lcoord) -- ($(lcoord) + (step) + (0,2)$);
	\fill[color=white] ($(lcoord) + 0.5*(step) + (0,1)$) circle (1em);
	\draw[semithick] (rcoord) -- ($(rcoord) - (step) + (0,2)$);
	\outerlab{{,}}{c}
	\node at ($(0,2) + (\deltay)$) [replace] {\Large$b$}; 

	\node[scale=2] at (4, -1) {$=$};
	\node[scale=2] at (6.75, -0.75) {$\left(R^{ab}_c\right)^{-1}$};

	\fusiontree{2}{{8.75,0.75}}
	\outerlab{{a,b}}{c}	
	\node[scale=2] at (11.5, -1) {.};
\end{tikzpicture}
\end{equation*}
Note that these diagrams distinguish between a positive crossing and its inverse. Using these identifications, we define a natural isomorphism $R_{ab}\in \Hom(a \times b, b \times a) = \bigoplus_{c \in \ccat} \Hom(a \times b, c)$ satisfying
\begin{equation}\label{anyonic braiding op}
	R_{ab} = \sum_c N_{ab}^c R^{ab}_c \ket{ab; c}
\end{equation}
for each every pair $(a, b)$ of \textit{simple} objects in $\ccat$.

Now we extend the map $(a, b) \to R_{ab}$ to generic objects of $\ccat$ using its tensor category structure: the Hexagon Axiom \eqref{hex eqn} allows us to decompose terms of the form $R_{a, b \times c}$ and $R_{a \times b, c}$, for simple objects $a, b, c$, in a consistent manner using an appropriate sequence of $F$-moves. In particular, the Hexagon Axiom \eqref{hex eqn} guarantees the commutativity of the hexagon in \Cref{hexagon}, which is needed in any tensor category, because it explicitly enforces equivalence between the two sequences of $F$- and $R$-moves illustrated in \Cref{tree hex}. Thus, much like the Pentagon Axiom \eqref{pent eqn} enforces the coherence for the tensor product required to make $\ccat$ a monoidal category, the Hexagon Axiom \eqref{hex eqn} imposes the compatibility on the braiding needed to make $\ccat$ a braided category, in the sense of \crossrefrfc{braided tensor category}.

\begin{figure}
\centering
\begin{tikzcd}[column sep={2.4cm,between origins}, row sep={1.732050808cm,between origins}]
	& (a \times c) \times b \ar[rr] 
	&& a \times (c \times b) \arrow[rd, "\id_a \otimes R_{c, b}"] & 
	&  \\
	(c \times a) \times b \arrow[rd] \arrow[ru, "R_{c,a} \otimes \id_b"] 
	&  &&  & a \times(b \times c) \\
	& c \times (a \times b) \arrow[rr, "R_{c, a \times b}"] && (a \times b) \times c
	\ar[ru]
\end{tikzcd}
\caption{The Hexagon Axiom \eqref{hex eqn} guarantees the commutativity of this diagram, which is required in any braided category.}
\label{hexagon}
\end{figure}

\begin{figure}[!h]
\begin{center}
\begin{tikzpicture}[scale=0.33]
	\tikzmath{\apo = 14;}
	\coordinate (yshift) at (0,3);
	\coordinate (tshift) at (-2.5, -0.5);
		
	\coordinate (a);
	\coordinate (b) at ($(2,0) + (a)$);
	\coordinate (c) at ($(4,0) + (a)$);
	\coordinate (aa) at ($(a) + (yshift)$);
	\coordinate (bb) at ($(b) + (yshift)$);
	\coordinate (cc) at ($(c) + (yshift)$);
	
	\coordinate (v) at ($(0:\apo) + (tshift)$);
	\fusiontree{3}{v}
	\outerlab[{0,3}]{{a,b,c}}{d}
	\innerlab{{\relax}}
	\node at ($(b) + (1.15, -3) + (v)$) [replace] {$g$};
	\filldraw[very thick, white] ($(b) + (v)$) -- ++($\ss*(mop)$);
	\draw[semithick] ($(b) + (v)$) -- ++(m);
	\draw[semithick] ($(aa) + (v)$) -- ($(a) + (v)$);
	\draw[semithick] ($(bb) + (v)$) -- ($(b) + (v)$);
	\draw[semithick] ($(cc) + (v)$) -- ($(c) + (v)$);
		
	\coordinate (v) at ($(60:\apo) + (tshift)$);
	\fusiontree{3}{v}
	\outerlab[{0,3}]{{a,b,c}}{d}
	\innerlab{{\relax}}
	\node at ($(b) + (1.15, -3) + (v)$) [replace] {$g$};
	\filldraw[very thick, white] ($(b) + (v)$) -- ++($\ss*(mop)$);
	\draw[semithick] ($(b) + (v)$) -- ++(m);
	\draw[semithick] ($(aa) + (v)$) -- ($(a) + (v)$);
	\draw[semithick] ($(bb) + (v)$) -- ($(c) + (v)$);
	\filldraw[white] ($(bb)!0.5!(c) + (v)$) circle (1em);
	\draw[semithick] ($(cc) + (v)$) -- ($(b) + (v)$);
	
	\coordinate (v) at ($(120:\apo) + (tshift)$);
	\fusiontree{3}{v}
	\outerlab[{0,3}]{{a,b,c}}{d}
	\innerlab{{e}}
	\draw[semithick] ($(aa) + (v)$) -- ($(a) + (v)$);
	\draw[semithick] ($(bb) + (v)$) -- ($(c) + (v)$);
	\filldraw[white] ($(bb)!0.5!(c) +(v)$) circle (1em);
	\draw[semithick] ($(cc) + (v)$) -- ($(b) + (v)$);
	
	\coordinate (v) at ($(180:\apo) + (tshift)$);
	\fusiontree{3}{v}
	\outerlab[{0,3}]{{a,b,c}}{d}
	\innerlab{{e}}
	\draw[semithick] ($(aa) + (v)$) -- ($(b) + (v)$);
	\draw[semithick] ($(bb) + (v)$) -- ($(c) + (v)$);
	\filldraw[white] ($(aa)!0.666!(b) + (v)$) circle (1em);
	\filldraw[white] ($(bb)!0.333!(c) +(v)$) circle (1em);
	\draw[semithick] ($(cc) + (v)$) -- ($(a) + (v)$);
	
	\coordinate (v) at ($(240:\apo) + (tshift)$);
	\fusiontree{3}{v}
	\outerlab[{0,3}]{{a,b,c}}{d}
	\innerlab{{\relax}}
	\node[semithick] at ($(b) + (1.15, -3) + (v)$) [replace] {$f$};
	\filldraw[very thick, white] ($(b) + (v)$) -- ++($\ss*(mop)$);
	\draw[semithick] ($(b) + (v)$) -- ++(m);
	\draw[semithick] ($(aa) + (v)$) -- ($(b) + (v)$);
	\draw[semithick] ($(bb) + (v)$) -- ($(c) + (v)$);
	\filldraw[color=white] ($(aa)!0.666!(b) + (v)$) circle (1em);
	\filldraw[color=white] ($(bb)!0.333!(c) + (v)$) circle (1em);
	\draw[semithick] ($(cc) + (v)$) -- ($(a) + (v)$);
	
	\coordinate (v) at ($(300:\apo) + (tshift)$);
	\fusiontree{3}{v}
	\outerlab[{0,3}]{{a,b,c}}{d}
	\innerlab{{f}}
	\draw[semithick] ($(aa) + (v)$) -- ($(a) + (v)$);
	\draw[semithick] ($(bb) + (v)$) -- ($(b) + (v)$);
	\draw[semithick] ($(cc) + (v)$) -- ($(c) + (v)$);	
	
	\tikzmath{\start = 0.35; \tar = 1 - \start;}
	\foreach \curr in {0,...,5} {
		\pgfmathsetmacro\ti{int(\curr)*60}
		\coordinate (vi) at ($(\ti:\apo)!\start!(\ti+60:\apo)$);
		\coordinate (vf) at ($(\ti:\apo)!\tar!(\ti+60:\apo)$);
		\ifnum\curr<3
			\draw[thick, latex-] (vi) -- (vf);
		\else
			\draw[thick, -latex] (vi) -- (vf);
		\fi
	}
	
	\tikzmath{\apo = \apo; \factor=1.5;}
	\node[scale=\factor] at ($(0.5*60:\apo)$) {$R^{cb}_g$};
	\node[scale=\factor] at ($(1.5*60:\apo-4)$) {$F^{acb}_d$};
	\node[scale=\factor] at ($(2.5*60:\apo)$) {$R^{ca}_e$};
	\node[scale=\factor] at ($(3.5*60:\apo-4.5)$) {$F^{cab}_d$};
	\node[scale=\factor] at ($(4.5*60:\apo)$) {$R^{cf}_d$};
	\node[scale=\factor] at ($(5.5*60:\apo-4.5)$) {$F^{abc}_d$};
\end{tikzpicture}
\end{center}	
\caption{Enforcing the Hexagon Axiom \Cref{hex eqn}, which is equivalent to the commutativity of this diagram, ensures that the braiding on $\ccat$ is compatible with its monoidal structure.}
\label{tree hex}
\end{figure}
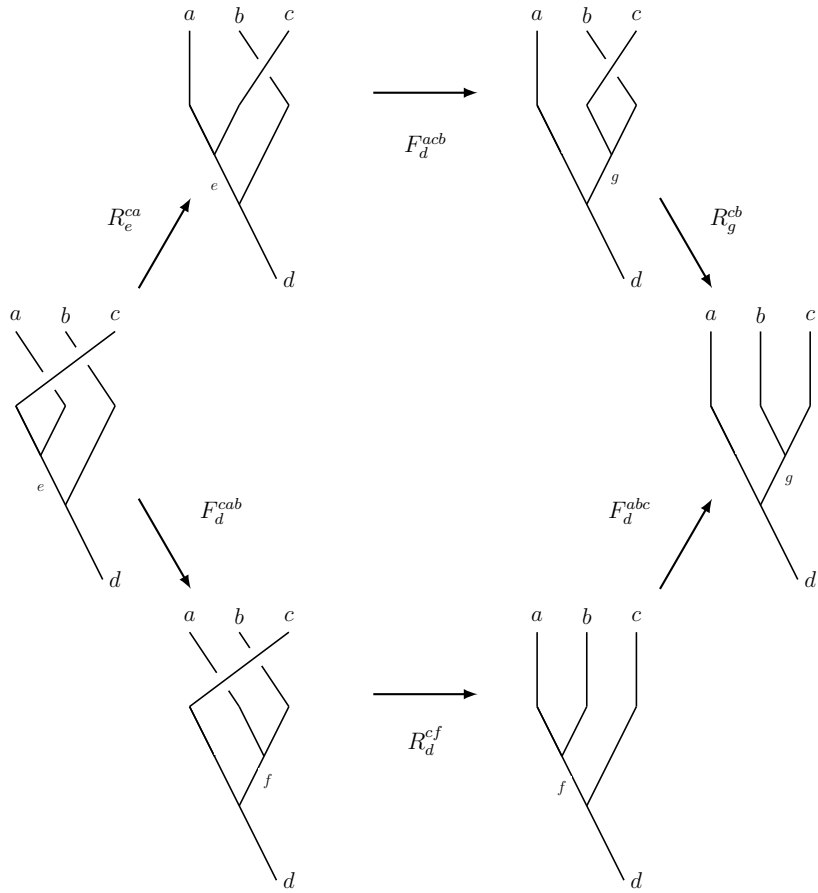

With compatible braiding and monoidal structures in hand, it remains to define a twist. To do this, we must first construct a left rigidity on $\ccat$, in the sense of \crossrefrfc{left duality}. The conjugation map $\ast\colon \ccat \to \ccat$ on the anyon system supplies the dual objects; concretely, for each $a \in \mathrm{Ob}(\ccat)$ we let $a^* \in \mathrm{Ob}(\ccat)$ denote its dual. In addition, we define the birth and death duality maps $b_a \in \Hom(\mathbb{C}, a \times a^*)$ and $d_a \in \Hom(a^* \times a, \mathbb{C})$ as the tangles
\begin{center}
\begin{tikzpicture}
	\node at (0, 0) {$b_a = $};
	\coordinate (blc) at (0.75,-0.25);
	\draw[semithick,->] ($(0,0.5)+(blc)$) to ($(0,0.25)+(blc)$) to [out=-90, in=180] ($(0.5,-0.25)+(blc)$) to [out=0, in=-90] ($(1,0.25)+(blc)$) to ($(1,.5)+(blc)$);
	\node at ($(0.05, 0.85) + (blc)$) {$a^{\phantom *}$};
	\node at ($(1.05, 0.85) + (blc)$) {$a^*$};
	\node at (4, 0) {and};
	\node at (7, 0) {$d_a = $};
	\coordinate (blc) at (7.75,-0.75);
	\draw[semithick,<-] ($(0,0.5)+(blc)$) to ($(0,0.75)+(blc)$) to [out=90, in=180] ($(0.5,1.25)+(blc)$) to [out=0, in=90] ($(1,0.75)+(blc)$) to ($(1,0.5)+(blc)$);
	\node at ($(0.05, 0.15) + (blc)$) {$a^*$};
	\node at ($(1.05, 0.15) + (blc)$) {$a^{\phantom *}$};
\end{tikzpicture} 
\end{center}
The rigidity axioms \crossrefrfc{left rigidity} follow immediately from the so-called \textit{arms-bending} Moves 5-8 in \crossrefrfc{ribbon category relations}.

We may now define a family of twists on $\ccat$ using the left duality and the pivotal structure $t_a$. First, for each $a \in \ccat$, define the isomorphism $\psi_a\colon a^{**} \to a$ as the composition $\psi_a = (\id_a \otimes d_{a^*})(c_{a, a^{**}} \otimes \id_{a^*})(\id_{a^{**}} \otimes b_a)$ corresponding to the following tangle:
\begin{equation*}
\begin{tikzpicture}[scale=0.5]
	\coordinate (blc) at (-10, 0);
	
	\node[scale=1.2] at ($(-2, 4.5) + (blc)$) {$\psi_a = $};
	\draw[semithick,->] ($(0,9) + (blc)$) -- ++(0,-3);
	\draw[semithick,->] ($(0,7)+(blc)$) to ($(0,5.5)+(blc)$) to [out=-90, in=90] ($(2.5,3.5)+(blc)$) to ($(2.5,2.5)+(blc)$);
	\draw[semithick] ($(2.5,2.5)+(blc)$) to ($(2.5,2)+(blc)$) to [out=-90, in=180] ($(3.85,1)+(blc)$) to [out=0, in=-90] ($(5,2)+(blc)$) to ($(5,6)+(blc)$) to [out=90, in=-90] ($(5,7)+(blc)$) to [out=90, in=0] ($(3.75,8)+(blc)$);
	\path[fill=white] ($(1.25,4.5)+(blc)$) circle (.3);
	\draw[semithick,->] ($(5,4)+(blc)$) -- ++(0,1);
	\draw[semithick,->] ($(3.75,8)+(blc)$) to [out=180, in=90] ($(2.5,7)+(blc)$) to ($(2.5,5.5)+(blc)$) to [out=-90, in=90] ($(0,3.5)+(blc)$) to ($(0,2.5)+(blc)$);
	\draw[semithick,->] ($(2.5,7)+(blc)$) -- ++(0,-1);
	\draw[semithick] ($(0,3.5)+(blc)$) to ($(0,0)+(blc)$);
\end{tikzpicture}
\end{equation*}
Since the conjugation map is an involution, we have $a^{**} = a$ in the tensor category $\ccat$. Now set $\theta_a = t_a \psi_a$ for each $a \in \ccat$. Theorem~4.17 in \cite{wang_2010} guarantees that  $\theta\colon \ccat \to \ccat$ extends to a functor defining a family of twists in the sense of \crossrefrfc{ribbon cat}. Alternatively, see Lemma~2.2.2 in \cite{baki}. Notice that the tangle defining $\psi_a$ motivates the RFC terminology: multiplying by $\theta_a$ introduces a twist in the framed strand, or ribbon, labeled by $a$. 

Since we may construct a category equipped with compatible monoidal, braiding, and ribbon structures using the data in \Cref{anyon system} characterizing an anyon system, it follows that each anyon system determines an RFC. 

In fact this RFC is characterized uniquely by the corresponding anyon system up to equivalence of its $6j$-system \cite[Proposition~1.1]{yamagami_2002}. 

\begin{dfn}\cite[Definition~4.10]{wang_2010}\label[defn]{anyon gauge transform}
	Two $6j$-systems $F$ and $\widetilde{F}$ on a label set $\ccat$ are \textit{gauge equivalent} if there exists a map $f\colon \ccat^3 \to \mathbb{C}$, written as $f(a, b, c) = f^{ab}_c$ and known as a \textit{gauge transformation}, satisfying the following axioms. 
	\begin{enumerate}
		\item $f^{ab}_c \neq 0$ if and only if $(a, b, c)$ is admissible.
		\item $f^{\mathbf{1} a}_a = f^{a \mathbf{1}}_a = 1$ for every $a \in \ccat$. 
		\item \textit{(Rectangle Axiom.)} For every sextuple $(a, b, c, d, x, y) \in \ccat^6$, 
		\begin{equation}\label{rect axiom}
			f^{bc}_y  f^{ay}_d F^{abc}_{d; xy} = \widetilde{F}^{abc}_{d; xy} f^{ab}_x f^{xc}_d.
		\end{equation}
	\end{enumerate}
	Then $F$ and $\widetilde{F}$ are \textit{equivalent} if they are gauge equivalent up to fusion ring automorphism.
\end{dfn}

We may interpret the gauge transformation $\{f^{ab}_c\}$ as a family of change-of-basis maps on $\{V^{ab}_c\}$ that is compatible with the $F$-matrix. Scalars suffice because we only consider multiplicity-free anyon systems, which means each fusion space $\Hom(a \times b, c) = V^{ab}_c$ is at most one-dimensional. Thus \Cref{rect axiom} expresses the commutativity of the following diagram.
\begin{equation*}
\begin{tikzcd}[row sep=large]
	\bigoplus_x V^{ab}_x \otimes V^{xc}_d \ar[r, "="] 
		\ar[d, swap, "f^{ab}_x \otimes f^{xc}_d\quad"]
	& \Hom\left(a \times b \times c, d\right) \ar[r, "F^{abc}_d"] 
	& \bigoplus_y V^{ay}_d \otimes V^{bc}_y \ar[d, "\quad f^{ay}_d \otimes f^{bc}_y"]
	\\
	\bigoplus_x V^{ab}_x \otimes V^{xc}_d \ar[r, "\widetilde{F}^{abc}_d"]
	& \Hom\left(a \times b \times c, d\right) \ar[r, "="]
	& \bigoplus_y V^{ay}_d \otimes V^{bc}_y 
\end{tikzcd}
\end{equation*}

Since there is a one-to-one correspondence between anyon systems, up to gauge equivalence, and RFCs, up to categorical equivalence, from now on we do not distinguish between an anyon system and the RFC it characterizes.

To conclude, we remark that the fusion rule alone is \textit{almost} sufficient to pin down the corresponding RFC: \textit{Ocneanu rigidity} states that there are only finitely many equivalence classes of RFCs with a given fusion rule. Ocneanu himself never published a proof but there are various secondary sources; see, e.g., \cite[Theorem~2.28]{etingof_fusion}, \cite[Section~E.6]{kitaev_2006}, and \cite[Theorem~4.1]{hagge}.

\subsection{RFC framework for TQC}
\label{rfc framework for tqc}

Now we explain how to leverage the RFC framework to simulate anyonic quantum computers. The key is to look at the braid representations induced by the RFC corresponding to a given anyon system under the right light.

These representations arise as follows. For concreteness, let $\ccat$ denote an anyon system, let $a$ denote any label in $\ccat$, and consider the map $R_{aa}$ defined by \Cref{anyonic braiding op}. Then for any $m$ we obtain a braid group representation $B_m \to \End_{\ccat}(a^{\times m})$ satisfying 
\begin{equation}\label{braid repn}
	\sigma_j \to \id_{a^{\times (j-1)}} \otimes R_{aa} \otimes \id_{a^{\times (m - j-1)}},
\end{equation}
with $\sigma_j$ denoting the $j$th braid generator of $B_m$. For details, recall \crossrefrfc{braid repn induced by braiding}.

\Cref{braid repn} may be understood as an action of $B_m$ on the fusion spaces $\Hom(a^{\times m}, b)$, for $b \in \ccat$. This follows from the semisimplicity of $\ccat$: if we write $a^{\times m} = \bigoplus_{b \in \ccat} N_b \, b$ for some multiplicities $N_b$, we see that
\begin{equation}\label{end a^m hom decomp}
	\End(a^{\times m}) = \bigoplus_{b \in \ccat} N_b \Hom(a^{\times m}, b).
\end{equation}

This action is essential: it explains the quantum computing application. To see this, we must borrow two facts from theoretical physics. The first is that anyon configurations are constrained to two spatial dimensions. This means the world-lines tracking the dynamics of point-like anyons in their $(2+1)$-dimensional spacetime are described by braids, since the fundamental group of the configuration space associated to $m$ well-separated anyons in a plane is the braid group $B_m$. The second fact is that anyon configurations in certain topological phases of matter are expected to support degenerate ground states protected by a positive energy gap \cite[Section~2]{nayak_simon_stern_freedman}; these anyons are said to be \textit{non-abelian}. This means we can assign a full $n$-dimensional Hilbert space, with $n > 1$, to a fixed anyon configuration, and that it is possible to exchange anyons adiabatically while preserving this state space.

The most remarkable feature is that, while preserving the state \textit{space}, each exchange induces a state transformation that is non-trivial in general. Since the world-lines describing anyon exchanges in the corresponding $(2+1)$-dimensional spacetime determine (topological) equivalence classes of braids, there is a correspondence between exchanges and unitary operators on the state space that factors through the braid group. In particular, each operator exchanging anyons with adjacent labels corresponds to a braid generator.
\begin{equation*}
\begin{tikzcd}[column sep=0.5cm]
	B_m \rar
	& \{\text{unitary operators on state space}\} \\
	\{\text{adiabatic pairwise anyon exchanges}\} \uar \ar[ur]
\end{tikzcd}
\end{equation*}

This gives the endomorphism spaces of $\ccat$ a nice physical interpretation: each tangle in $\End(a^m)$ models (a topological equivalence class of) world-lines in a $(2+1)$-dimensional spacetime describing the time evolution of a certain anyon configuration.

Fusion spaces enter the story as follows. Following a sequence of anyon exchanges, which transforms a given state in the associated state space according to a representation of $B_m$, the anyons may be fused together as a kind of measurement operation. The fusion, which we denote by $a^{\times m}$, results in a superposition $\bigoplus_{b \in \ccat} N_b \, b$ over all possible anyon types. The probability of observing any particular outcome is proportional to the multiplicity $N_b$. Therefore the state space associated to our given anyon configuration decomposes into a sum of fusion spaces according to the possible measurement outcomes. 

Thus the braid group action induced by $\ccat$ on the fusion space $\Hom(a^{\times m}, b)$ models the physical process of adiabatic anyon exchange followed by fusing $a^{\times m}$ to obtain $b$. 

This physical process results in computation once we identify a distinguished \textit{computational basis} $\ket{0}, \ldots, \ket{d-1}$ in $\Hom(a^{\times m}, b)$ and keep track of anyon exchanges using the braid group representation described explicitly in terms of our distinguished basis. The representation $\rho^a_m\colon B_m \to \End_{\ccat}(a^{\times m})$ is a bookkeeping device: the image of each generator tracks how a given vector in the chosen qudit computational basis $\ket{0}, \ldots, \ket{d-1}$ transforms under the effect of an anyon exchange. In other words, the representation provides a dictionary between braid generators and unitary transformations on the degenerate ground state. \Cref{sigma gen on fusion tree} illustrates this process.

\def\nstrands{6}
\def\braididx{2}
\begin{figure}[!h]
\begin{center}
\begin{tikzpicture}[scale=0.4]
	\node[scale=1.25] at (0, 0.55) {$\sigma_\braididx \ket{0} = $};
	
	\braidedstdtree{\nstrands}{\braididx}{{3,\M/2}}
	\node[replace] at ($(0.5,0) + (bot)$) {\Large $b$};
	\node[replace] at ($(tlc) + (\deltay)$) {\Large $a$};
	\node[replace] at ($(tlc) + (\deltay) + (8, 0)$) {\Large $a$};
	\draw[semithick, -latex] (1.5*\nstrands + 3, 1) -- ++(3,0);
	\node[scale=2.25] at (1.5*\nstrands + 11, 1+-0.45) {
	$
	\displaystyle \sum_{f} 
	[\sigma_\braididx]_{\ket{e_1 e_2}, \; \ket{f e_2}}
	$
	};
	\stdtree{\nstrands}{{23,\M/2+0.5}}
	\innerlab{{f,\relax, \relax, e_2}}
	\node[replace] at ($(tlc) + (\deltay)$) {\Large $a$};
	\node[replace] at ($(tlc) + (\deltay) + (2, 0)$) {\Large $a$};
	\node[replace] at ($(tlc) + (\deltay) + (5, 0)$) {\Large $a$};
	\node[replace] at ($(tlc) + (\deltay) + (8, 0)$) {\Large $a$};
	\node[replace] at ($(0.5,0) + (bot)$) {\Large $b$};
\end{tikzpicture}
\end{center}	
\caption{The action of $\sigma_\braididx \in B_m$ on a chosen standard basis state $\ket{0} \in \Hom(a^{\times m}, b)$, with $m = 4$.}
\label{sigma gen on fusion tree} 
\end{figure}

Thus depending on $\ccat$, $a$, and $m$, it may be possible to identify traditional quantum circuit gates, like the CNOT and single qubit rotations, as certain braid words in the image of $\rho^a_m$. \Cref{numerical braids} illustrates some explicit identifications.

In general, the image of $\rho^a_m$ describes the set of possible logic gates available to an anyonic quantum computer processing information encoded in the fusion spaces $\Hom(a^{\times m}, b)$ by braiding anyons of type $a$. In this way, certain RFCs provide a means for modeling anyonic quantum computers.

Constructing these braid representations is therefore essential for understanding the computational power of a given anyon system. The following section obtains formulas for these representations in terms of $F$- and $R$-matrices.

\section{Braiding via $F$- and $R$-matrices}
\label{braiding via frt mats}
Let $\ccat$ be an anyon system in the sense of \Cref{anyon system}. In this section we describe the braid group representations $B_m \to \End_{\ccat}(a^{\times m})$ induced by $\ccat$ via \Cref{braid repn}. We decompose $\End(a^{\times m})$ as in \Cref{end a^m hom decomp} and describe the representations explicitly with respect to the computational bases defined in \Cref{comp basis}. Then we obtain the necessary matrix coefficients using the $F$- and $R$-matrices defining $\ccat$ in \Cref{general braiding formula}.

{\sc SageMath} can compute the explicit representations described below, for those anyon systems described by its \texttt{FusionRing} class; precisely, these are the anyon systems whose corresponding RFC is a semisimple quotient of the category of finite dimensional representations of a quantum group associated to a complex simple Lie algebra with deformation parameter a root of unity.

As noted in see \Cref{abcs of tqc}, these representations are critical to the quantum computing application because they describe the sets of possible logic gates that an anyonic quantum computer can use to process information encoded in associated fusion spaces.

To begin, we define a preferred computational basis $\mathcal{T}_m$ for each fusion space $\Hom(a^{\times m}, b)$. Our basis corresponds to the decomposition of $\Hom(a^{\times m}, b)$ obtained by fusing the anyons in pairs; for instance, if $m = 2r + 1$ we have
\[
	\Hom(a^{\times m}, b) 
	\cong \bigoplus_{\substack{t_1, \ldots, t_r \\ \ell_1, \ldots, \ell_{r-1}}} 
	V^{aa}_{t_1} \otimes \cdots \otimes V^{aa}_{t_r} 
	\otimes 
	V^{t_1 t_2}_{\ell_1} \otimes V^{\ell_1 t_3}_{\ell_2} 
	\otimes \cdots \otimes 
	V^{\ell_{r - 1}, a}_b
\]
This basis is parametrized by the $m-2$ labels $\ket{t \ell}$. For concreteness, \Cref{tree basis pic} depicts basis elements of $\Hom(a^7, b)$ and $\Hom(a^8, b)$ respectively labeled by
\begin{equation*}
	\ket{t_1 t_2 t_3 \ell_1 \ell_2} 
	\quad \text{and} \quad 
	\ket{t_1 t_2 t_3 t_4 \ell_1 \ell_2}.
\end{equation*}

\begin{figure}[!h]
\begin{center}
\begin{tikzpicture}[scale=0.35]
	\fusiontree{7}{{0,0}}
	\fusiontree{8}{{16,0}}
\end{tikzpicture}
\end{center}	
\caption{A fusion tree basis $\ket{t \ell}$ for $\Hom(a^{\otimes m}, b)$. On the left, $m = 7$ is odd. On the right, $m = 8$ is even.}
\label{tree basis pic}
\end{figure}
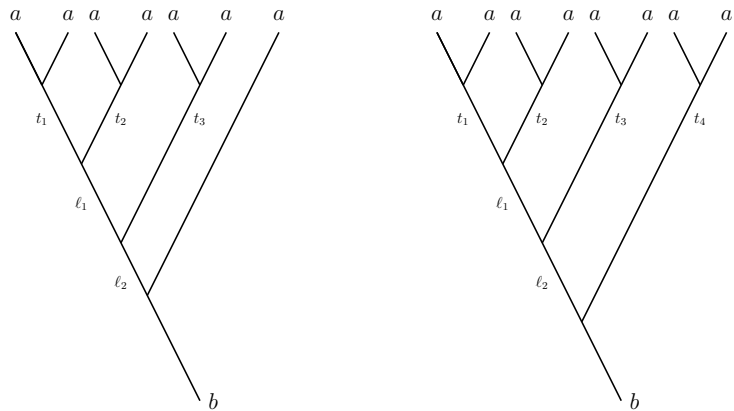

The next proposition enumerates $\mathcal{T}_m$ recursively. The bases differ slightly depending on the parity of $m$, so we treat the two cases separately. Our implementation in {\sc SageMath} mimics this construction. 
\begin{prop}\label[prop]{comp basis}
If $m = 2r$, let
\[
\mathcal{T}_m = 
	\bigcup_{t \in \{a \times a\}^r} 
	\{
		\ket{t \ell}
		\mid 
		\ell_1 \in \{t_1 \times t_2\}, \; 
		\ell_{j+1} \in \{\ell_j \times t_{j+2}\}, \; 
		j=1, \ldots, r-3, \; 
		b \in \{\ell_{r-2} \times t_r\}
	\}.
\]
If $m = 2r + 1$, replace the condition $b \in \{\ell_{r-2} \times t_r\}$ defining $\mathcal{T}_m$ by $b \in \{\ell_{r-1} \times a\}$. For any $m \geq 1$, $\mathcal{T}_m$ defines a basis of $\Hom(a^{\otimes m}, b)$.
\end{prop}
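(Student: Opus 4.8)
The plan is to prove \Cref{comp basis} by induction on $m$, mirroring the recursive construction in the statement (and the {\sc SageMath} implementation), and to reduce it to \Cref{m anyon decomp}. The key observation is that $\mathcal{T}_m$ is exactly the set of \emph{admissible} labelings of the fusion tree depicted in \Cref{tree basis pic}: the planar binary tree obtained by first fusing the $m$ copies of $a$ in consecutive pairs — producing the labels $t_1, \dots, t_r$ with $r = \lfloor m/2 \rfloor$, together with one leftover leaf $a$ when $m$ is odd — and then fusing the resulting objects left to right through a caterpillar, producing the internal labels $\ell_1, \ell_2, \dots$. Since \Cref{m anyon decomp} already tells us that any such fusion tree yields a basis of the relevant $\Hom$-space — the coherence that makes this independent of the choices involved being supplied by the Pentagon Axiom \eqref{pent eqn}, as discussed around \Cref{tree pentagon} — it suffices to unwind that decomposition for this particular tree shape and to check that its admissibility constraints match, term for term, the conditions defining $\mathcal{T}_m$.

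Concretely, for the inductive step I would peel off one pair at a time. Writing $m = 2r$ or $m = 2r+1$, using that $\ccat$ is strict monoidal together with the object-level decomposition $a \times a \cong \bigoplus_{t \in \ccat} N^{aa}_{t}\, t$ applied in each of the $r$ consecutive $(a,a)$-slots — these decompositions occupy disjoint tensor factors, so no coherence among them is needed — and using the $\mathbb{C}$-linearity and semisimplicity of $\ccat$, one obtains
\[
	\Hom(a^{\otimes m}, b)
	\;\cong\; \bigoplus_{t_1, \dots, t_r}
	\Bigl( \bigotimes_{i=1}^{r} V^{aa}_{t_i} \Bigr)
	\otimes \Hom\bigl( t_1 \times \cdots \times t_r \times a^{\times \varepsilon},\, b \bigr),
\]
where $\varepsilon \in \{0,1\}$ is the parity of $m$. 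Applying \Cref{m anyon decomp} to the remaining $\Hom$-space — which involves $r + \varepsilon$ objects — introduces precisely the intermediate labels $\ell_1, \ell_2, \dots$ subject to $\ell_1 \in \{t_1 \times t_2\}$, $\ell_{j+1} \in \{\ell_j \times t_{j+2}\}$, and the terminal condition relating $b$ to the last internal label (with the evident conventions, e.g.\ $\ell_0 := t_1$, making sense of the degenerate cases $r \le 2$ where some index ranges are empty). Because each $V^{aa}_{t_i}$ and each two-anyon fusion space appearing in the caterpillar is at most one-dimensional — this is the multiplicity-free hypothesis of \Cref{anyon system} — and is nonzero exactly on admissible triples, the nonzero summands above are in bijection with $\mathcal{T}_m$, and the fusion-tree vectors of \Cref{tree basis pic} are sent to the corresponding standard basis vectors; since the composite map is an isomorphism this yields both spanning and linear independence, which is the claim. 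The base cases $m = 1, 2, 3$ are checked directly: for $m = 3$ this is literally \Cref{three anyon decomp}, and for $m \le 2$ it is immediate.

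Nothing here is deep once \Cref{m anyon decomp} is granted, so the \emph{main obstacle} is really bookkeeping: making the index ranges in the definition of $\mathcal{T}_m$ match the admissibility conditions of the tree exactly, keeping the even and odd cases straight (they differ by whether the caterpillar ends in a $(t,a)$- or a $(t,t)$-vertex, i.e.\ by the value of $\varepsilon$), and correctly handling the small-$r$ degeneracies where conditions such as $\ell_1 \in \{t_1 \times t_2\}$ or $\ell_{j+1} \in \{\ell_j \times t_{j+2}\}$ become vacuous. I would handle this by running the two parities as separate inductions, each step removing one leading $(a,a)$-pair (equivalently, splitting off one $V^{aa}_{t_i}$ factor), so that the recursion in the statement is reproduced verbatim.
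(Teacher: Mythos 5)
Your proposal is correct and takes essentially the same route as the paper: the paper's proof is a one-line induction observing that the set of admissible caterpillar labelings $(\ell_1,\dots,\ell_{r-1})$ decomposes recursively as a union over choices of $\ell_1$, with the identification of admissible labelings with basis vectors taken for granted from \Cref{m anyon decomp}. You simply spell out the bookkeeping (peeling off the $(a,a)$-pairs, matching admissibility conditions, handling parities and small $r$) that the paper leaves implicit.
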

\begin{rmk}
When $m = 2r+1$ is odd, there is an ``unpaired'' $a$ that fuses with $\ell_{r-1}$  to produce the root $b$, so the tree labeled by $\ket{t \ell}$ is admissible only if $b \in \ell_{r - 1} \times a$.
\end{rmk}

\begin{proof}
	The proof follows from a quick induction: notice that
\begin{align*}
	\{
		(\ell_1, \ell_2, &\ldots, \ell_{r-1}) 
		\mid 
		\ell_1 \in t_1 \times t_2, \; 
		\ell_{j+1} \in \ell_j \times t_{j+2}, \; 
		j=1, \ldots, r-3, \; 
		b \in \ell_{r-2} \times t_r
	\}
	\\
	&= 
	\bigcup_{\ell_1 \in t_1 \times t_2}
	\{
		(\ell_1, \ell_2, \ldots, \ell_{r-1}) 
		\mid 
		\ell_{j+1} \in \ell_j \times t_{j+2}, \; 
		j=1, \ldots, r-3, \; 
		b \in \ell_{r-2} \times t_r
	\}.
	\qedhere
\end{align*}
\end{proof}

The next theorem obtains formulas for the action of braid generators on $\Hom(a^{\times m}, b)$ with respect to $\mathcal{T}_m$. We consider odd- and even-indexed generators separately. \Cref{diag braid gen} shows each odd-indexed generator acts diagonally on $\mathcal{T}_m$; our computational basis diagonalizes at least half the braid generators. The action of even-indexed generators is more complicated but it boils down to a calculation in either $\Hom(a^3, b)$ or $\Hom(a^4, b)$, depending on the parity of $m$, so we single out these important special cases in the next lemma.

\begin{lem}\label[lem]{sig2 in b3 and b4}
With respect to $\mathcal{T}_m$, the braid generator $\sigma_2 \in B_3$ acts on $\Hom(a^{\otimes 3}, b)$ as the matrix
\[
(\sigma_2)_{\ket{x}, \ket{y}} 
	= \sum_{z \in \ccat} 
	\left[ F^{aaa}_{b;\, zy} \right]^{-1} R^{aa}_y F^{aaa}_{b;\,yx}.
\]
Similarly, $\sigma_2 \in B_4$ acts on $\Hom(a^{\otimes 4}, b)$ as the matrix
\[
(\sigma_2)_{\ket{x x'}, \ket{y y'}}
	= \sum_{c, d}
	F^{aay'}_{b;\,yc} 
	\left[F^{aaa}_{c;\,y' d} \right]^{-1}
	R^{aa}_{d}
	F^{aaa}_{c;\,d x'}
	\left[F^{aa x'}_{b;\,cx} \right]^{-1}.
\]
\end{lem}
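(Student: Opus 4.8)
The plan is to compute the action of $\sigma_2$ directly from the defining relation \eqref{braid repn} by conjugating the local $R$-move into the standard fusion basis $\mathcal{T}_m$ via a sequence of $F$-moves. For $B_3$ acting on $\Hom(a^{\otimes 3}, b)$, the computational basis $\mathcal{T}_3$ is labeled by a single intermediate label: $\ket{x}$ corresponds to the tree where the first two $a$'s fuse to $x$ and then $x \times a \to b$. The generator $\sigma_2$ braids the second and third strands, so it acts naturally not on the $\ket{x}$-basis but on the ``rotated'' basis in which the \emph{last} two $a$'s are fused first (the right-hand decomposition in \eqref{three anyon decomp} applied to $V^{aaa}_b$). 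So the first step is to pass from $\ket{x}$ to the rotated basis using the $F$-move $F^{aaa}_b$; the second step is to apply the $R$-move on the fused pair, which contributes the scalar $R^{aa}_y$ on the rotated basis state indexed by $y$ (this is precisely \eqref{anyonic braiding op} restricted to a one-dimensional fusion space); the third step is to rotate back using $(F^{aaa}_b)^{-1}$. Composing these three linear maps and reading off the $(\ket{x},\ket{y})$ matrix entry gives exactly $\sum_z [F^{aaa}_{b;\,zy}]^{-1} R^{aa}_y F^{aaa}_{b;\,yx}$, where the sum over $z$ is the matrix-multiplication index from $(F^{aaa}_b)^{-1}$ acting after the diagonal $R$; the admissibility conditions built into the $F$-matrix (Definition~\ref{anyon system}(iv)(a)) ensure only admissible trees contribute.

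For $B_4$ acting on $\Hom(a^{\otimes 4}, b)$, the computational basis $\mathcal{T}_4$ is labeled by the pair $\ket{xx'}$ where $a\times a \to x$ on strands $1,2$ and $a \times a \to x'$ on strands $3,4$, then $x \times x' \to b$. The generator $\sigma_2$ again braids the middle two strands (strands $2$ and $3$), so the subtree carrying strands $1,2,3$ must be re-associated so that strands $2,3$ sit on a common fused edge, while the strand-$4$ leg and the root $b$ are held fixed. Concretely, I would perform the $F$-moves needed to bring the local picture into the form handled by the $B_3$ case embedded inside strands $1,2,3$: first an $F$-move $F^{aay'}_b$ to peel strand $4$ (with its fused value $y'$) off to the side, then the three-step $B_3$ computation $(F^{aaa}_c)^{-1} R^{aa} F^{aaa}_c$ on the subtree rooted at the intermediate label $c = \{$fusion of strands $1,2,3\}$, and finally an inverse $F$-move $(F^{aax'}_b)^{-1}$ to return to $\mathcal{T}_4$. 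Tracking the intermediate summation indices — $c$ for the value of strands $1,2,3$ fused, $d$ for the value on the braided edge inside the subtree — yields the stated formula $\sum_{c,d} F^{aay'}_{b;\,yc} [F^{aaa}_{c;\,y'd}]^{-1} R^{aa}_d F^{aaa}_{c;\,dx'} [F^{aax'}_{b;\,cx}]^{-1}$. Again all admissibility is automatic from the $F$-matrix support condition, and the triangle axiom handles the degenerate cases where some intermediate label equals $\mathbf{1}$.

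The main obstacle is purely bookkeeping: getting the order of composition right so that the indices on the $F$-symbols and their inverses line up correctly with the conventions in \eqref{pent eqn} and the $F$-move picture (Figure~\ref{f move}), and in particular being careful about which $F$-matrix is inverted and on which side. The cleanest way to avoid sign/transpose errors is to draw the fusion-tree diagrams explicitly — mirroring Figures~\ref{f move} and \ref{tree hex} — and read off each elementary transition as a diagram rewrite, multiplying the associated scalars. The $B_3$ identity is essentially a restatement of one ``half'' of the Hexagon Axiom \eqref{hex eqn} (with $a=b=c$), which can serve as a consistency check; the $B_4$ identity is then obtained by the embedding-plus-peeling argument above, using naturality of $F$-moves with respect to the strand-$4$/root-$b$ spectators. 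No new input beyond Definition~\ref{anyon system}, the $F$-move of Figure~\ref{f move}, and the $R$-move identity \eqref{anyonic braiding op} is required; the proof is a direct unwinding of these definitions.
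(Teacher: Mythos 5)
Your proposal is correct and follows essentially the same route as the paper: the paper's proof is exactly the diagrammatic conjugation of the local $R$-move by a sequence of $F$-moves (peeling off the spectator strand in the $B_4$ case and tracking the intermediate labels $c$ and $d$), carried out explicitly in the figures accompanying the lemma. Your observation that drawing the fusion trees is the cleanest way to fix the bookkeeping is precisely how the paper presents it.
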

\begin{proof}
	Our proof is diagrammatic. \Cref{odd one out three strands,mid sig} compute the action of $\sigma_2$ on $\Hom(a^3, b)$ and $\Hom(a^4, b)$ with respect to $\mathcal{T}_m$ using a sequence of $F$- and $R$-moves. In both cases the sums range over all admissible trees.
\end{proof}

With \Cref{sig2 in b3 and b4} in hand, we may easily compute the action of every braid generator.

\begin{thm}\label{general braiding formula}
	For any $m$ and any $j = 1, \ldots, r$, the braid generator $\sigma_{2j-1}$ in $B_m$ acts  on $\mathcal{T}_m$ as the matrix
\begin{equation}\label{odd sig}
	\left(\sigma_{2j-1}\right)_{\ket{t \ell}, \ket{t' \ell'}} 
	= 
	R^{aa}_{t_j} \delta_{\ket{t \ell}, \ket{t' \ell'}}.
\end{equation}
	Now suppose $m = 2r + 1$ and consider $\sigma_2^{(3)} \in B_3$. Then $\sigma_{2r}$ in $B_m$ acts on $\mathcal{T}_m$ as the matrix
\[	
	\left(\sigma_{2j}\right)_{\ket{t'\ell'}, \ket{t\ell}} 
	=
	\sum_c
	F^{\ell_{r-1} t_r a}_{b;\, \ell_{r-2} c} 
	\left(\sigma_2^{(3)}\right)_{\ket{t_r'}, \,\ket{t_r}}
	\left[F^{\ell_{r-1} t_r' a}_{b; \, c f} \right]^{-1}
\]	
	Conversely, suppose that $m = 2r$, or that $m = 2r + 1$ and $j < r$, and consider $\sigma_2^{(4)} \in B_4$. Then, $\sigma_{2j}$ in $B_m$ acts on $\mathcal{T}_m$ as the matrix
\[
	\left(\sigma_{2j}\right)_{\ket{t'\ell'}, \ket{t\ell}} 
	= 
	\sum_c
	F^{\ell^* t_{j} t_{j+1}}_{b^*;\, \ell_{j-1} c} 
	\left(\sigma_2^{(4)}\right)_{\ket{t_j' t_{j+1}'}, \,\ket{t_j t_{j+1}}}
	\left[F^{\ell^* t_j t_{j+1}}_{b^*; \, c f} \right]^{-1}
\]
Here $\ell^* = \ell_{j-2}$ if $j \geq 2$, $\ell^* = t_1$ if $j = 1$, $b^* = \ell_j$ if $j < r-2$, and $b^* = b$ if $j = r-2$.
\end{thm}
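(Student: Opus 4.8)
The plan is to argue diagrammatically, reducing the action of each braid generator on $\mathcal{T}_m$ to a local computation involving at most four adjacent strands, and then to invoke the $R$-move (for odd-indexed generators) or \Cref{sig2 in b3 and b4} (for even-indexed generators). For an odd-indexed generator $\sigma_{2j-1}$, the crossing exchanges strands $2j-1$ and $2j$, which in every tree of $\mathcal{T}_m$ are precisely the two copies of $a$ meeting at the $j$th trivalent vertex, with fusion outcome $t_j$. By the defining identity of the $R$-move, a tree carrying such a crossing equals $R^{aa}_{t_j}$ times the same tree without it; equivalently, under the decomposition \eqref{m anyon decomp} adapted to $\mathcal{T}_m$ the operator of \eqref{braid repn} acts as the identity on every tensor factor except the one-dimensional $V^{aa}_{t_j}$, where it is multiplication by $R^{aa}_{t_j}$. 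This gives \eqref{odd sig}; cf.\ \Cref{diag braid gen}.

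For an even-indexed generator $\sigma_{2j}$, the crossing exchanges the second leaf of the $j$th cherry with the first leaf of the next cherry (or with the unpaired strand, when $m=2r+1$ and $j=r$), so it straddles two cherries and cannot be read off directly; I would first isolate the affected strands by an $F$-move. When $m=2r+1$ and $j=r$, the three strands $2r-1,2r,2r+1$ are consecutive copies of $a$, and near the root a tree of $\mathcal{T}_m$ reads $\bigl((\ell_{r-2}\times t_r)\times a\bigr)_b$; applying the $F$-move that reassociates this to $\bigl(\ell_{r-2}\times(t_r\times a)\bigr)_b$ exposes the factor $\bigl((a\times a)_{t_r}\times a\bigr)_c$, a standard-basis vector of $\Hom(a^{\times 3},c)$ in the sense of \Cref{comp basis}, leaving the rest of the tree untouched. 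By \Cref{sig2 in b3 and b4} the generator $\sigma_2^{(3)}$ acts on that factor by the stated matrix; reassociating back with the inverse $F$-move yields the formula. In every other case strands $2j-1,2j,2j+1,2j+2$ are consecutive copies of $a$, the relevant part of the tree reads $\bigl((\ell^*\times t_j)\times t_{j+1}\bigr)_{b^*}$, and the analogous $F$-move exposes $\bigl((a\times a)_{t_j}\times(a\times a)_{t_{j+1}}\bigr)_c$, which is exactly the pairwise-fusion standard basis of $\Hom(a^{\times 4},c)$; \Cref{sig2 in b3 and b4} then supplies the $\sigma_2^{(4)}$ block, and the inverse $F$-move finishes. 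The boundary cases are absorbed into the stated conventions for $\ell^*$ and $b^*$: when the context label $\ell_{j-2}$ would not exist it is replaced by $t_1$ at the left end, and when the local picture already reaches the root one takes $b^*=b$; in particular for $j=1$ no reassociation is needed and $\sigma_{2j}=\sigma_2^{(4)}$ outright.

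I expect the main obstacle to be the localization step rather than the algebra. One must check that, under \eqref{m anyon decomp}, the change of basis realizing each reassociation is \emph{exactly} the indicated $F$-matrix acting on the corresponding tensor factor and the identity on every untouched strand, and that the inverse reassociation restores the tree verbatim outside the modified internal edge. This is the MacLane-coherence argument recorded after \eqref{three anyon decomp}: the $F$-move in play touches only three objects — $\ell^*,t_j,t_{j+1}$ (resp.\ $\ell_{r-2},t_r,a$) — so by naturality it is $[F^{\ell^* t_j t_{j+1}}_{b^*}]$ on that slot and trivial elsewhere, while the Pentagon Axiom \eqref{pent eqn} guarantees that the forward and backward reassociations flanking $\sigma_2^{(3)}$ or $\sigma_2^{(4)}$ are genuinely inverse to one another regardless of the chosen sequence of $F$-moves. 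Once this is in place the formulas follow by composing three matrices. A secondary but fiddly point is pinning down the precise dictionary between strand indices and tree edges for each parity of $m$, which is what determines the superscripts and subscripts appearing in the $F$-symbols and the exact form of the boundary conventions.
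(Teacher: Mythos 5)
Your proposal matches the paper's argument: odd-indexed generators act by the $R$-move on the one-dimensional factor $V^{aa}_{t_j}$ (the content of \Cref{diag braid gen}), and even-indexed generators are localized by an $F$-move to a subspace isomorphic to $\Hom(a^{\times 4},c)$ or $\Hom(a^{\times 3},c)$, where \Cref{sig2 in b3 and b4} applies, followed by the inverse $F$-move — exactly the sequence of moves depicted in \Cref{sig2j,odd one out}. Your extra attention to the localization/naturality step and the boundary conventions for $\ell^*$ and $b^*$ is a faithful elaboration of what the paper leaves to the diagrams, not a different route.
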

\begin{proof}
	Again, our proof is diagrammatic. \Cref{sig2j} computes the action $\sigma_{2j}$ on $\Hom(a^m, b)$ using a sequence of $F$- and $R$-moves. The second equality uses \Cref{sig2 in b3 and b4} to apply $\sigma_2^{(4)}$ to a subspace of $\Hom(a^m, b)$ isomorphic to $\Hom(a^4, c)$. \Cref{odd one out} illustrates the slight modification necessary when $m = 2r+1$ and $j = r$: in this case we apply $\sigma_2^{(3)}$ to a subspace of $\Hom(a^m, b)$ isomorphic to $\Hom(a^3, c)$. 
\end{proof}

\section{Numerical computations}
\label{numerical braids}
In this section we illustrate a few explicit computations performed using the \texttt{FusionRing} class implemented in {\sc SageMath}. In particular, we explicitly identify a few traditional quantum circuit gates in the image of the braid representations induced by the so-called \textit{metaplectic} and \textit{Fibonacci} anyons \cite{hastings_nayak_wang_2013,trebst_troyer_wang_ludwig_2008}.

For instance, we may easily reproduce the calculations in Section~3.2 of \cite{cui_wang_2015} using a few lines of code. To begin, we construct a \texttt{FusionRing} modeling the anyon system $SU(2)_4$, with objects labeled by the $\mathfrak{su}(2)$ weights $j/2$ for $0 \leq 0 \leq 4$. We use the \texttt{fusion\_labels}
\[
	0 \leftrightarrow \mathbf{1},
	\qquad
	1/2 \leftrightarrow X_\epsilon,
	\qquad
	1 \leftrightarrow Y,
	\qquad
	3/2 \leftrightarrow X_\epsilon',
	\qquad
	2 \leftrightarrow Z
\]
specified in \cite[Section~3.2]{cui_wang_2015} for notational consistency.

\begin{python}
su24 = FusionRing('A1', 4)
su24.fusion_labels(['one', 'X_e', 'Y', 'X_ep', 'Z'], inject_variables=True)
\end{python}
Injecting variables allows us to verify that indeed \pyth{X_e**2 == one + Y}. 

Next we construct explicit matrices describing the representation $\rho^{X_\epsilon}_4 \colon B_4 \to \Hom(X_\epsilon, Y)$ with respect to the computational basis described in \Cref{comp basis}.
\begin{python}
comp_basis, sig = su24.get_braid_generators(X_e, Y, 4)
\end{python}
We may verify that indeed \pyth{comp_basis == [(Y, Y), (Y, one), (one, Y)]}. Section~3.2 in \cite{cui_wang_2015} considers the ordered basis $\{\ket{YY}, \ket{\mathbf{1}Y}, \ket{Y\mathbf{1}}\}$ so we re-order our basis here for consistency.
\begin{python}
T = Permutation([1, 3, 2]).to_matrix()
sig = [T * ss * T.inverse() for ss in sig]
\end{python}
Finally we may verify the matrices printed in \cite{cui_wang_2015}. For instance, 
\begin{python}
omega = su24.root_of_unity(2/3)
gamma = su24.root_of_unity(1/12)
sig[0] / gamma, sig[2] / gamma
\end{python}
results in the matrices
\begin{equation*}
\left[\begin{array}{rrr}
1 & 0 & 0 \\
0 & \zeta_{48}^{8} - 1 & 0 \\
0 & 0 & 1
\end{array}\right]
\quad\text{and}\quad
\left[\begin{array}{rrr}
1 & 0 & 0 \\
0 & 1 & 0 \\
0 & 0 & \zeta_{48}^{8} - 1
\end{array}\right].
\end{equation*}
Here $\zeta_{48} = e^{{2\pi i \over 48}}$ denotes the cyclotomic field generator \pyth{su24.field().gen()}. Of course $\texttt{omega} ==  \zeta_{48}^{8} - 1$. Similarly, we may verify Cui and Wang's construction of the Hadamard gate, up to a phase: the code
\begin{python}
p = sig[0] * sig[1] * sig[0] / gamma**3
q = sig[2] * sig[1] * sig[2] / gamma**3
H = q**2 * p * q**2
sqrtd = su24.field()(sqrt(3))
su24.root_of_unity(1/2) * sqrtd * H
\end{python}
prints
\[
\left[\begin{array}{rrr}
1 & 1 & 1 \\
1 & \zeta_{48}^{8} - 1 & -\zeta_{48}^{8} \\
1 & -\zeta_{48}^{8} & \zeta_{48}^{8} - 1
\end{array}\right].
\]
Equivalently, we could define the Hadamard gate as a matrix \texttt{H}, cast it as an element of the \texttt{MatrixGroup} generated by \texttt{sig}, and then using the \texttt{word\_problem} method to obtain an expression for \texttt{H} as a word in the elements of the list \texttt{sig}.

In addition, we may ask for the structure of the group generated by the braid generators, modulo the global phase $\gamma = e^{{2 \pi i \over 12}}$.
\begin{python}
G = MatrixGroup([ss / gamma for ss in sig])
G.structure_description()
\end{python} 
Indeed, we may verify that \pyth{G.cardinality() == 648} as claimed.

Notice we may just as easily deal with metaplectic anyons belonging to larger fusion rings. In particular, the following code computes the braid representations induced by metaplectic anyons in $SO(2r+1)_2$. The metaplectic anyon $X_\epsilon$ is labeled by the half-integral weight in $\mathfrak{so}_{2r+1}$ and $Y_1$ corresponds to the first fundamental weight $(1, 0, \ldots, 0)$. 
\begin{python}
r = 5
sor = FusionRing(['B', r], 2)
Y1 = sor([1] + [0]*(r-1))
X_e = sor([1/2]*r)
comp_basis, sig = sor.get_braid_generators(X_e, Y1, 4)
G = MatrixGroup(sig)
G.structure_description()
\end{python}
The result of this calculation explains the \textit{metaplectic anyon} terminology: the image of the induced representation is a metaplectic group \cite{goldschmidt_jones}.

Similarly, we may just as easily explore braid representations on more strands. 
\begin{python}
n_strands = 7
comp_basis, sig = sor.get_braid_generators(X_e, Y1, n_strands)
\end{python}

As another example, we consider building Pauli gates $X, Y, Z$ as words in the image of a braid representation induced by the Fibonacci anyon, which we label by $\tau$ as usual. In particular, we consider the representation $\rho^\tau_3 \colon B_3 \to \Hom(\tau^{\times 3}, \tau)$.
\begin{python}
fr = FusionRing('G2', 1)
fr.fusion_labels(('one', 'tau'), inject_variables=True)
comp_basis, sig = fib.get_braid_generators(tau, tau, 3)
\end{python}

The image of $\rho^\tau_3$ is dense in $SU(2)$, so in principle we could approximate any single-qubit gate, to arbitrary accuracy, using an appropriate word in $B_3$.  To illustrate the approximation process, we follow Section~V.B in \cite{bonesteele} and construct so-called \textit{weaves} approximating the gate $i X$. We note that in this case, {\sc SageMath} cannot simply solve the word problem in the image of $\rho^\tau_3$ because the generating matrices contain non-cyclotomic entries. 
\begin{python}
CCmat = MatrixSpace(CC, 2, 2)
target = CCmat([0, I, I, 0])
pattern = weave_searcher(target, max_len=11, tol=1e-2)
weave = sig[0].parent().one()
for j in range(len(pattern)):
    weave = sig[j
\end{python}
A naive implementation of the brute-force \texttt{weave\_searcher} is available at \url{https://github.com/willieab/weave_searcher}.

\section{$F$-matrix solver implementation}
\label{solver details}
In this section we discuss our implementation of the orthogonal $F$-matrix solver in some detail. The solver is available as the {\sc SageMath} method \texttt{FMatrix.find\_orthogonal\_solution}. Currently, the code is available on the {\sc SageMath} development branch at \url{https://trac.sagemath.org/ticket/30423}; it is set to merge into the stable {\sc SageMath} $9.8$ release. In a nutshell, the solver computes a solution to the Pentagon Equations \eqref{pent eqn} associated to a given \texttt{FusionRing} object using Groebner basis methods; essentially, the solver implements various techniques to ensure that the calculation, which typically scales exponentially with the number of variables and equations and their degrees, remains tractable \cite{buchberger_1976}. 

The main novelties are that we exploit the Hexagon Equations \eqref{hex eqn}, enforce orthogonality, and partition the system at crucial moments in the computation according to the \textit{equations graph} defined in \ref{eqn graph}. We learned that: the Hexagon equations alone determine a significant fraction of the unknowns, and they help determine a field containing the $F$-matrix as they feature $3j$-symbols in the form of cyclotomic coefficients; that including the orthogonality constraints shrinks the solution variety so it makes the search for a Groebner basis more efficient; and that partitioning allows the solver to consider relatively small subsets of equations independently and in parallel.

\begin{dfn}\label[defn]{eqn graph}
	Given a set $P$ of polynomials in $\mathbb{C}[x_1, \ldots, x_n]$, let $\mathcal{G}(P)$ denote the associated undirected \textit{equations graph} with vertices labeled by $x_1, \ldots, x_n$ and an edge $x_i \to x_j$ if and only if there is a polynomial in $P$ with a non-zero term divisible by $x_i x_j$. That is, nodes in $\mathcal{G}(P)$ correspond to variables and two nodes are connected whenever the corresponding variables appear together in an element of $P$.
\end{dfn}

The solution algorithm consists of three main steps:
\begin{enumerate}
	\item find the $F$-symbols determined by the Hexagon Equations \eqref{hex eqn} together with the orthogonality constraints
\begin{equation*}
	\big[F^{abc}_d \big]^T \big[F^{abc}_d \big] = I, 
	\quad\text{for every } (a, b, c, d) \in \ccat^4 \text{ such that } F^{abc}_d \neq 0;
\end{equation*}
	\item substitute into the Pentagon Equations \eqref{pent eqn} and eliminate variables iteratively using various reduction heuristics; and finally
	\item obtain a numerical solution by solving the few remaining relations amongst the few remaining unknowns.
\end{enumerate}
We note that requiring orthogonality instead of unitarity avoids duplicating the number of variables: unitarity necessitates complex conjugates for each unknown $F^{abc}_{d; \, ef}$. This choice seems to have no practical bearing, since all the $6j$-systems we have obtained turn out to be real, which means each matrix $F^{abc}_{d}$ is real orthogonal and therefore unitary.

Each of the three main steps involves a Groebner basis calculation that nevertheless remains tractable in many interesting cases for the following reasons. First, the graph defined by the Hexagon Equations together with the orthogonality constraints consists of many relatively small connected components that are processed independently and in parallel. Regardless of the graph structure, the solver ignores \textit{large} connected components: those with more nodes than allowed by the optional \texttt{max\_component\_size=45} parameter. In all cases studied, the solver obtains over $50\%$ of the $F$-symbols in this step, without even setting up the Pentagon Equations. 

Second, the reduction heuristics turn out to be rather powerful as they significantly decrease the number of equations and variables in the pentagon system through repeated back substitution. In any case, most pentagons vanish already because many $F$-symbols are obtained in Step $(1)$. 

Third, the equations graph of the reduced system consists of isolated points and a few small connected components, if any. In particular, typically less than $0.25\%$ of the Pentagon Equations remain at this stage, and most of them are univariate quadratics. This usually obviates the need for a Groebner basis calculation in Step $(3)$. 

The following concrete example illustrates the typical flow of the solution algorithm.  In the Type $B$ fusion ring $SO(21)_2 = \texttt{FusionRing('B10', 2)}$, there are over $1.5$ million Pentagon Equations in $17,437$ variables. Step $(1)$ determines $11,540$ $F$-symbols. At the start of Step $(2)$, the solver sets up only $651,173$  Pentagon Equations in the remaining $5,897$ variables. The elimination loop obtains an equivalent system of $193$ relations amongst $193$ unknowns. Every relation in the reduced system is of the form $x_j^2 - \alpha$, for some constant $\alpha$, so in Step $(3)$ the solver obtains a solution using simple root finding methods. We note that the fusion rings $SO(n)_2$ arise in connection to the duality results discussed in \crossrefglnch\ and \crossrefsonch.

At the end of a successful calculation, the $F$-symbols may be retrieved using the \texttt{FMatrix.get\_fvars} method. They are reported as elements of a common number field. Mathematically, this field is the compositum of the associated \texttt{FusionRing}'s \texttt{field} and the extension of $\mathbb{Q}$ defined by the product of all terms remaining in Step $(3)$. Computing a defining polynomial for this compositum turns out to be intractable sometimes, so in certain cases the $F$-symbols are returned as elements of {\sc SageMath}'s generic \texttt{AlgebraicField}. These cases were determined using experimental data collected in March $2021$.

We note that the elimination loop of Step $(2)$ typically accounts for over $80\%$ of the processing time. Each iteration consists of two steps: first find new $F$-symbols (possibly in terms of others that come later in the lexicographical order) and then update the remaining polynomials with the new expressions. Following \cite[Section~2.5]{bonderson}, a triangular solver extracts $F$-symbols from the ideal basis elements resulting from the Groebner basis calculation at the end of Step $(1)$. In fact we solve only \textit{easy} equations: those defined by a polynomial $p$ with at most two terms, one of which is univariate and linear in the largest variable that appears in $p$.
This simple reduction step turns out to be rather powerful; there is no need to solve for higher-order terms, which typically have multiple solutions and require sophisticated branching methods to keep track of all possibilities, or for linear terms in longer polynomials, which result in solutions whose repeated back-substitution quickly places a heavy burden on system memory. 

Solving easy equations is fast, so the update step in fact accounts for the vast majority of the processing time. During this update step, the solver also reduces each polynomial in the ideal basis modulo its leading coefficient, its greatest common factor known to be non-zero, and modulo every quadratic of the form $x_j^2 - \alpha$, for some constant $\alpha$. This reduction step typically produces new two-term equations so the elimination loop continues until no new easy equations are found. Even though our custom arithmetic engine provides the fast \texttt{update\_reduce} method that operates on the polynomial's sparse exponent vector directly at the {\sc C} level, updating and reducing polynomials remains a bottleneck.

To accelerate this and other lengthy calculations, our implementation relies on concurrent programming to distribute tasks amongst several worker processes.

\subsection{Parallel computations}
\label{parallel details}

The solver leverages the parent-child paradigm and our own bare-bones implementation of the \texttt{MapReduce} protocol introduced in \cite{mapreduce} to split up \textit{embarrassingly parallel} tasks: those whose dependency graph is trivial so they can be completed independently without requiring synchronization amongst workers.

{\sc Python}'s Global Interpreter Lock (GIL) makes it so that only a single thread may be active at any one time, so we use multiple processes, instead of multiple threads, to achieve true concurrency. Each process owns its own memory space, and no process may access any other's memory blocks. This means that any piece of solver state required by a worker must be pickled and communicated over pipes. In practice, however, the Inter-Processor Communication (IPC) overhead proved prohibitively slow, so the solver works hard to keep the communication costs at a minimum.

For instance, the solver avoids piping \texttt{self}, the \texttt{FMatrix} object managing the whole calculation, by leveraging the fact that forking produces identical copies of the parent's virtual memory in each child process. This means we may reference the copy of \texttt{self} in each child process using the \textit{virtual} address \texttt{id(self)} computed in the parent process. Thus we need only pass method \textit{names} and the address \texttt{id(self)} to worker processes: each process can then bind the corresponding method to its own copy \texttt{self}, located at \texttt{id(self)}.

As another example, the solver avoids passing long lists of label tuples when instructing worker processes to set up the Hexagon and Pentagon Equations; instead, it passes each of the \texttt{n\_proc} workers a unique index $0 \leq \texttt{child\_id} < \texttt{n\_proc}$ and instructs each worker to enumerate all possible tuples but to process only the ones with index \texttt{child\_id} modulo \texttt{n\_proc}.

In the spirit of reducing IPC further, we implemented various pieces of solver state using shared memory-backed data structures. The following subsection describes these in detail. 

A notable exception is the \texttt{ideal\_basis}. This {\sc Python} \texttt{list} keeps track of the relations amongst the unknowns that remain at each step of the calculation. The parent process owns the \texttt{ideal\_basis}. At each elimination round, it pipes the \texttt{ideal\_basis} to child processes in chunks for updating using the \texttt{MapReduce} protocol: the \texttt{update\_reduce} method is the mapper and the reducer simply collects all polynomials and discards duplicates. Experimental evidence suggests the optimal chunk size is $\lfloor \texttt{len(ideal\_basis)} / \texttt{n\_proc}^2 \rfloor + 1$: when there are too many chunks the IPC overhead dominates and when there are too few some workers idle. 

\subsection{Shared solver state}
\label{data structures}

We describe the state variables maintained by our solver, as summarized in \Cref{state vars}. Most crucial data structures reside in shared memory because the solver performs parallel computations and IPC is rather costly in practice.

\begin{table}[!h]
\centering
\begin{tabular}{c|c|c}
Name & Class &  Updated by \\
\hline
\texttt{fvars} & \texttt{FvarsHandler} & Parent \\
\texttt{ks} & \texttt{KSHandler} & Parent \\
\texttt{solved} & \texttt{list} & Parent \\
\texttt{var\_degs} & \texttt{list} & Parent \\
\end{tabular}
\caption{Shared memory-backed variables maintained by the orthogonal $F$-matrix solver.}
\label{state vars}
\end{table}

First we describe our polynomial representation. We treat each $6j$-symbol $F^{abc}_{d; \, ef}$ as an unknown and assign a unique linear index to each \textit{admissible} sextuple $(a, b, c, d, e, f)$. The \texttt{idx\_to\_sextuple} dictionary manages this map. This means we view every polynomial equation as an identity of the form $f(x) = 0$, for some $f \in \cycfield[x_1, \ldots, x_n]$, with $\cycfield$ denoting the associated \texttt{FusionRing}'s cyclotomic \texttt{field} and with $n = \texttt{len(idx\_to\_sextuple)}$. The polynomial ring $\mathcal{P} \coloneqq \cycfield[x_1, \ldots, x_n]$ plays an important role in this section. Unless otherwise stated, we consider $\mathcal{P}$ equipped with the partial order induced by the \textit{degree reverse lexicographical (degrevlex)} monomial ordering.

Internally, the solver represents each element of $\mathcal{P}$ as a degrevlex-ordered tuple of exponent-coefficient pairs. In turn, each exponent is implemented as a {\sc SageMath} \texttt{ETuple} while its corresponding cyclotomic coefficient as a {\sc Python} tuple. We use \texttt{ETuple}s for exponents because they can efficiently handle sparse vectors: the monomials we encounter tend to be very sparse and an \texttt{ETuple} stores the sparse exponent vector $(p_1, \ldots, p_n)$ on $x_1^{p_1} \cdots x_n^{p_n}$ compactly as the {\sc C} \texttt{int} array $(i_1, p_{i_1}, \ldots, i_k, p_{i_k})$, with $i_j$ denoting the subset of indices with $p_{i_j} \neq 0$. Conversely, we use a fixed-length dense representation for the coefficients for speed considerations: we store each cyclotomic coefficient as a tuple of $d$ {\sc SageMath} \texttt{Rational}s, with $d$ denoting the degree of $\cycfield$. Thus, for instance, if $\cycfield = \mathbb{Q}[\zeta_6]$ with $\zeta_6 = e^{i \pi / 3}$ and $n = 25$, we implement the polynomial
\begin{align}
	p(x) &= {2^{65} \over 2^{127} + 1} x_1^3 x_9 x_{13}^2 - \left({19 \over 7} \zeta_6 - 2^{129}\right) x_{21}^7
	\in \mathcal{P}
	\label{polynomial example}
	\intertext{as the {\sc Python} tuple}
	\Big(
		\big(
			(1, 3, 9, 1, &13, 2), \, 
			\left(2^{65} / (2^{127} + 1), \, 0 \right)
		\big), 
	\,\, 
		\big(
			\left(21, 7\right), \, \left(2^{129}, \, -19/7\right)
		\big)
	\Big).
	\label{internal poly repn}
\end{align}

Although {\sc SageMath} offers various implementations for multivariate polynomials, we developed our own for two main reasons. First, controlling the internal implementation allowed for the development of a custom arithmetic engine in {\sc Cython} that exploits sparsity and provides fast implementations for several core manipulations. In a large calculation, the solver calls the arithmetic engine methods millions of times, so it is important that these methods are fast and that they exploit fast {\sc Cython} dispatch protocols. Second, our internal implementation circumvents an issue we encountered when {\sc SageMath}'s multi-threaded \texttt{CyPari2} attempts to pipe the built-in polynomial classes from single-threaded child processes back to the main parent process.

The \texttt{fvars} structure records the current state of each unknown. The associated \texttt{solved} attribute maintains a list of $n$ booleans indicating which $F$-symbols are known, potentially as a polynomial in variables that are smaller with respect to the degrevlex ordering. Initially, \texttt{fvars} implements a dictionary mapping an admissible sextuple to a corresponding generator of $\mathcal{P}$. As the calculation progresses and the solver determines certain $F$-symbols in terms of smaller ones, the parent process performs back-substitution to update entries in \texttt{fvars} and \texttt{solved}. Thus, in general, \texttt{fvars} maintains a polynomial in $\mathcal{P}$ for each admissible sextuple.

The mapping represented by \texttt{fvars} could easily be implemented using a {\sc Python} dictionary, but there is no good means of sharing such complex {\sc Python} objects amongst processes without incurring significant IPC overhead; for this reason, we implemented the \texttt{FvarsHandler} class. This class emulates a {\sc Python} dictionary syntactically, supporting special {\sc Python} methods like \texttt{\_\_getitem\_\_} and \texttt{\_\_setitem\_\_} using the familiar bracket assignment syntax, while allowing several processes to read from the same piece of contiguous {\sc C}-level  memory simultaneously. 

The \texttt{FvarsHandler} class is implemented as a raw {\sc C}-style shared memory block provided by the \texttt{multiprocessing.shared\_memory} module. The shared memory block must be pre-allocated and later populated by entries of a fixed data type expressible as a collection of {\sc C} types. However, it can be accessed via \texttt{NumPy}'s buffer interface. 

Thus we view the shared memory block as a \texttt{NumPy} record array storing elements of the structured data type \texttt{fvars\_t}, which decomposes the internal polynomial representation illustrated in \eqref{internal poly repn} into collections of \texttt{NumPy} integers as follows. We consider the monomials and the cyclotomic coefficients separately. On one hand, we store the exponent data contiguously in a single one-dimensional array under the \texttt{exp\_data} field. The associated \texttt{ticks} field manages an array indicating the number of non-zero exponents in each term: this array indicates when the data in \texttt{exp\_data} ``jumps'' from term to term. The \texttt{ticks} array has length \texttt{max\_terms}, a parameter with default value $20$, which specifies the maximum number of terms in any representable polynomial. The length of \texttt{exp\_data} is \texttt{k*max\_terms}, for some fixed \texttt{k}. By default, \texttt{k=30}. For example, if $\texttt{max\_terms} = 4$ and $\texttt{k} = 3$, the \texttt{exp\_data} and \texttt{ticks} arrays representing the polynomial $p$ of \eqref{internal poly repn} are given by
\begin{align*}
\texttt{exp\_data} &= 
\begin{bmatrix} 1 & 3 & 9 & 1 & 13 & 2 & 21 & 7 & 0 & 0 & 0 & 0\end{bmatrix}
\\
\texttt{ticks} &= \begin{bmatrix} 3 & 1 & 0 & 0\end{bmatrix}.
\end{align*}

On the other hand, we store the cyclotomic coefficient data using two three-dimensional arrays: the \texttt{coeff\_nums} and \texttt{coeff\_denoms} arrays store $d$ numerators and denominators for each term. It would seem that two-dimensional arrays of shape \texttt{(max\_terms, d)} would suffice; however, native {\sc Python} integers can be arbitrarily large so to avoid overflowing $64$-bit \texttt{NumPy} integers we store the \textit{digits} of each signed numerator in base $2^{63}$ and of each \textit{un}signed denominator in base $2^{64}$. The \texttt{n\_bytes} parameter, with default value $32$, specifies the number of bytes pre-allocated to the numerator and denominator of each rational coefficient in \eqref{internal poly repn}. Thus each cyclotomic coefficient becomes an array with shape \texttt{(d, n\_bytes//8)}. For instance, if $\texttt{n\_bytes} = 24$, we store the coefficient data of the polynomial $p$ in \eqref{polynomial example} as
\begin{align*}
	\texttt{coeff\_nums[0, :, :]} &=
	\begin{bmatrix}
		0 & 4 & 0\\
		0 & 0 & 0
	\end{bmatrix} 
	&
	\texttt{coeff\_nums[1, :, :]} &=
	\begin{bmatrix}
		0 & 0 & 8\\
		-19 & 0 & 0
	\end{bmatrix} \\
	\texttt{coeff\_denoms[0, :, :]} &=
	\begin{bmatrix}
		1 & 2^{63} & 0\\
		1 & 0 & 0
	\end{bmatrix} 
	&
	\texttt{coeff\_denoms[1, :, :]} &=
	\begin{bmatrix}
		1 & 0 & 0\\
		7 & 0 & 0
	\end{bmatrix}
\end{align*}

Since we store only a collection of integers, retrieving a record involves costly {\sc Python} object instantiation. To mitigate this cost, the \texttt{FvarsHandler} implements a caching mechanism that reduces the number of times the solver constructs polynomial objects from the data stored in shared memory. Each processor  must build its own object cache, so \texttt{fvars\_t} contains a \texttt{modified} field with shape \texttt{(n, n\_proc)} indicating to each processor which entries were modified by the parent process since they were last retrieved.

Thus, all things considered, the data type \texttt{fvars\_t} contains the following fields:
\begin{itemize}
	\item \texttt{ticks}, with shape \texttt{(max\_terms,)}, stores unsigned $8$-bit integers indicating the number of non-zero exponents on each monomial;
	\item \texttt{exp\_data}, with shape \texttt{(30*max\_terms,)}, stores $16$-bit integers representing the non-zero exponents for all monomials in the corresponding polynomial;
	\item \texttt{coeff\_nums}, with shape \texttt{(max\_terms, d, n\_bytes//8)}, stores $64$-bit integers representing all numerators of the cyclotomic coefficients in base $2^{63}$;
	\item \texttt{coeff\_denoms}, with shape \texttt{(max\_terms, d, n\_bytes//8)}, stores unsigned $64$-bit integers representing all denominators of the cyclotomic coefficients in base $2^{64}$; and
	\item \texttt{modified}, with shape \texttt{(n\_proc,)}, stores $8$-bit integers indicating to each child process which entries have been modified by the parent process.
\end{itemize}

The default values for the parameters and data types for each field within the structured array were determined using experimental observations. For context, we note {\sc SageMath}'s \texttt{PolynomialRing} class can model polynomial rings with at most $2^{15}$ generators. 

Similar to the \texttt{FvarsHandler}, the \texttt{KSHandler} implements a shared mapping that manages $F$-symbols with known squares: those determined up to a sign by a quadratic of the form $x_j^2 - \alpha$, for some $\alpha \in \cycfield$. The known squares play an important role in the elimination loop, where they allow for substantial simplification when updating ideal basis elements. Like the \texttt{FvarsHandler}, the \texttt{KSHandler} also emulates a {\sc Python} dictionary syntactically and it serves multiple processes simultaneously. Moreover, the \texttt{KSHandler} uses the same memory layout as the \texttt{FvarsHandler} except but its structured data type only contains the \texttt{coeff\_nums} and \texttt{coeff\_denoms} fields.

Both the \texttt{FvarsHandler} and the \texttt{KSHandler} class are implemented in {\sc Cython} for performance reasons: they use typed {\sc Cython} \texttt{memoryview}s to avoid {\sc Python} overhead in manipulating indices and accessing buffer entries so they enjoy access to the shared memory blocks directly at the {\sc C} level. In addition, the \texttt{KSHandler} class offers an API consisting entirely of \texttt{cdef} methods. Thus, while only accessible to {\sc Cython} code,  \texttt{KSHandler} methods do not incur any {\sc Python} calling overhead.

Finally, we note that shared \texttt{var\_degs} attribute maintains a \texttt{list} recording the highest power of each known variable appearing in the \texttt{ideal\_basis} at any given time. Using it, each child process can pre-compute all the necessary powers of each $F$-symbol expression at the start of an elimination step; this significantly reduces the total arithmetic performed.

\section{Appendix: diagrammatic proof of braid generator formulas}
This appendix supplies a  diagrammatic proof of \Cref{general braiding formula}.

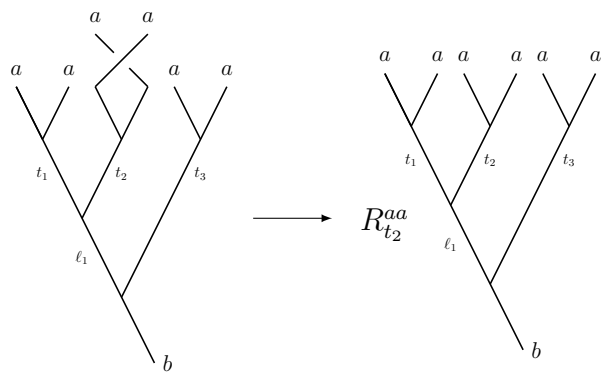
\begin{figure}[!h]
\begin{center}
\begin{tikzpicture}[scale=0.35]
	\braidedtree{6}{3}{{0,5}}
	\draw[-latex] (9, 0) -- (12,0);
	\node[scale=2] at (14, -0.25) {$R^{aa}_{t_2}$};		
	\fusiontree{6}{{14,5.5}}
\end{tikzpicture}
\end{center}	
\caption{The odd-indexed braid generators $\sigma_{2j-1} \in B_m$ act diagonally with respect to our computational basis $\mathcal{T}_m$. Here $j = 2$ and $m = 6$.}
\label{diag braid gen}
\end{figure} 

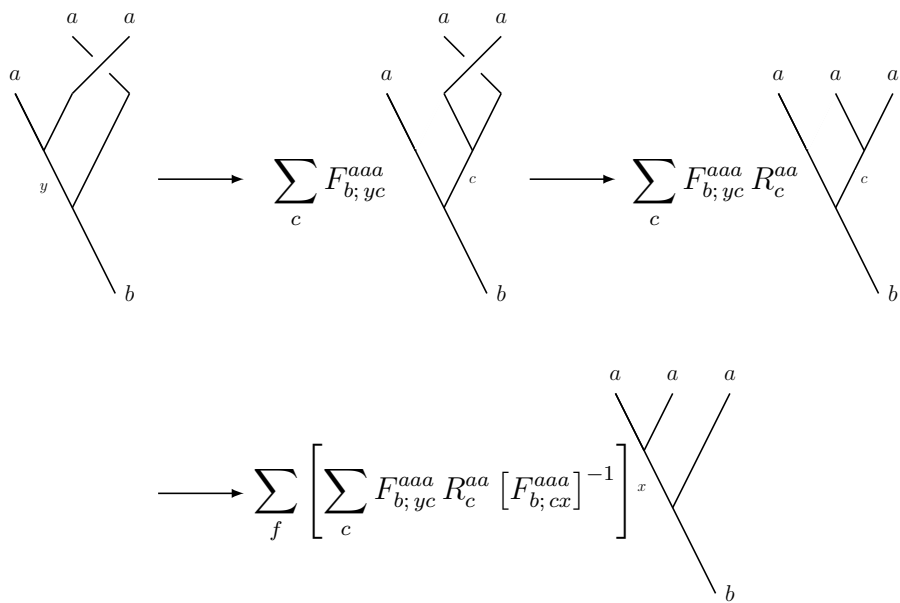
\begin{figure}[!h]
\begin{center}
\begin{tikzpicture}[scale=0.38]
	\braidedtree{3}{2}{{0,3}}
	\innerlab{{y}}
	
	\draw[semithick, -latex] (5, 0) -- (8,0);
	\node[scale=2] at (11, -0.45) {
		$
		\displaystyle \sum_c
		F^{aaa}_{b;\,yc}
		$
	};
	
	\coordinate (tlc) at (13, 3);
	\braidedtree{3}{2}{tlc}
	\innerlab{{\relax}}
	\coordinate (p) at ($(3, -3) + (tlc)$);
	\node at (p) [replace] {$c$};
	\filldraw[very thick, white] ($(2,0) + (tlc)$) -- ++($\ss*(mop)$);
	\draw[semithick] ($(2,0) + (tlc)$) -- ++(m);
	
	\draw[semithick, -latex] (18, 0) -- (21,0);
	\node[scale=2] at (24.5, -0.45) {
		$
		\displaystyle \sum_c
		F^{aaa}_{b;\,yc} \, R^{aa}_c
		$
	};	
	
	\coordinate (tlc) at (26.7, 3);
	\fusiontree{3}{tlc}
	\innerlab{{\relax}}
	\coordinate (p) at ($(3, -3) + (tlc)$);
	\node at (p) [replace] {$c$};
	\filldraw[very thick, white] ($(2,0) + (tlc)$) -- ++($\ss*(mop)$);
	\draw[semithick] ($(2,0) + (tlc)$) -- ++(m);
		
	\draw[semithick, -latex] (5, -11) -- (8,-11);
	\node[scale=2] at (15, -11) {
		$
		\displaystyle \sum_f \left[ \sum_c 
		F^{aaa}_{b;\,yc} \, R^{aa}_c \left[F^{aaa}_{b;\,cx}\right]^{-1} 
		\right]
		$
	};	
	
	\fusiontree{3}{{21,-7.5}}
	\innerlab{{x\;}}
\end{tikzpicture}
\end{center}
\caption{The action of $\sigma_2 \in B_3$ on $\Hom(a^3, b)$ with respect to our fusion tree basis.}
\label{odd one out three strands}
\end{figure}

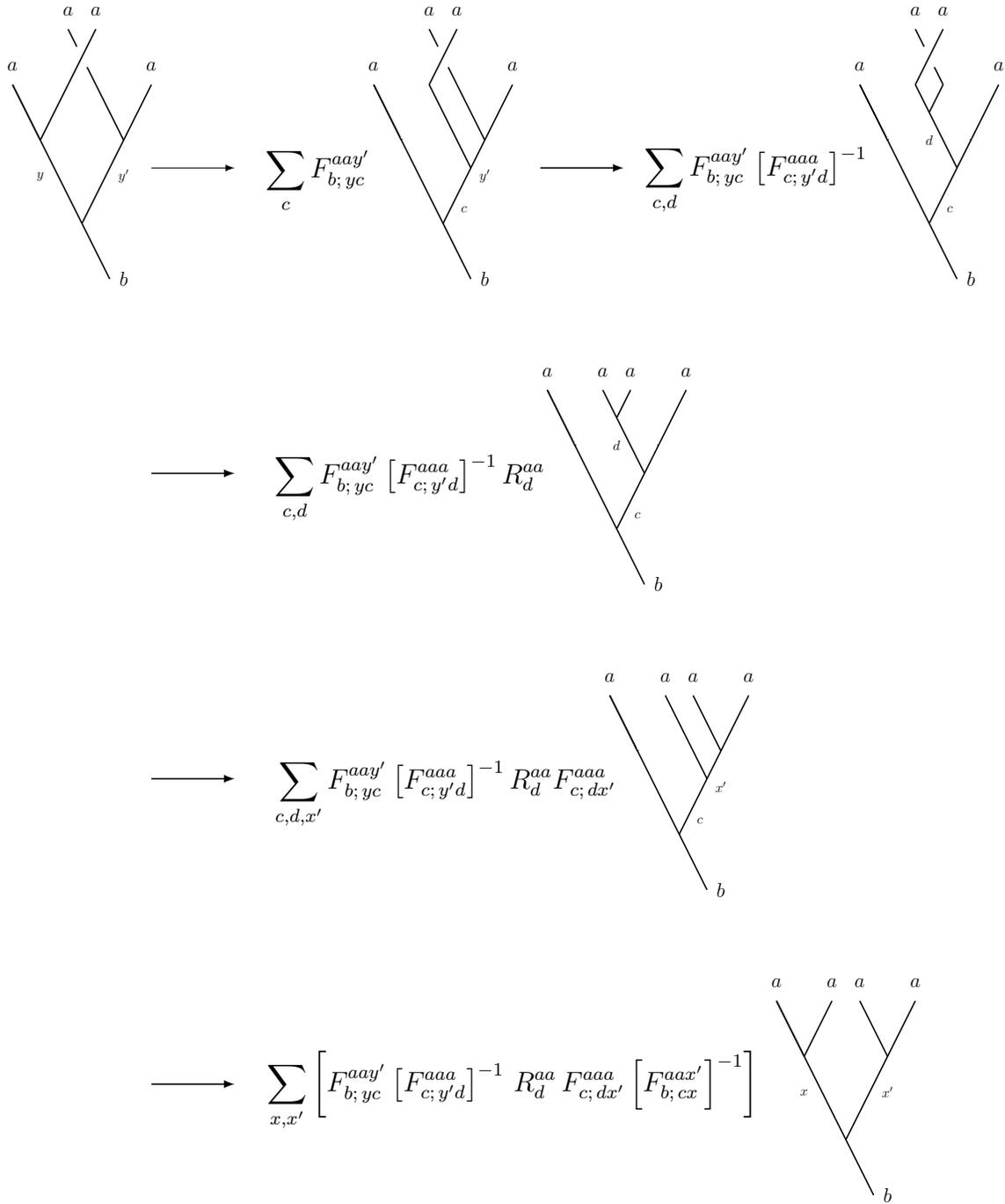
\begin{figure}[!h]
\begin{center}
\begin{tikzpicture}[scale=0.42]
	\braidedtree{4}{2}{{0,3}}
	\innerlab{{y,y'}}
		
	\draw[-latex] (5, 0) -- (8,0);
	\node[scale=2] at (11, -0.45) {
		$
		\displaystyle \sum_c
		F^{aay'}_{b;\,yc}
		$
	};
	
	\coordinate (tlc) at (13, 3);
	\braidedtree{4}{2}{tlc}
	\innerlab{{\relax, y'}}
	\coordinate (p) at ($(3.25,-4.5) + (tlc)$);
	\node at (p) [replace] {$c$};
	\filldraw[very thick, white] ($(2,0) + (tlc)$) -- ++($\ss*(mop)$);
	\draw[semithick] ($(2,0) + (tlc)$) -- ++($1.5*(m)$);	
	
	\draw[semithick, -latex] (19, 0) -- (22,0);
	\node[scale=2] at (26.75, -0.45) {
		$
		\displaystyle \sum_{c,d}
		F^{aay'}_{b;\,yc} 
		\left[F^{aaa}_{c;\,y' d} \right]^{-1}
		$
	};	
	
	\coordinate (tlc) at (30.5, 3);
	\braidedtree{4}{2}{tlc}
	\innerlab{{\relax, \relax}}
	\coordinate (p) at ($(3.25,-4.5) + (tlc)$);
	\node at (p) [replace] {$c$};
	\coordinate (p) at ($(2.5,-2) + (tlc)$);
	\node at (p) [replace] {$d$};
	\filldraw[very thick, white] ($(2,0) + (tlc)$) -- ++($\ss*(mop)$);
	\draw[semithick] ($(2,0) + (tlc)$) -- ++($1.5*(m)$);	
	\filldraw[very thick, white] ($(3,0) + (tlc)$) -- ++($\ss*(m)$);
	\draw[semithick] ($(3,0) + (tlc)$) -- ++(-0.5,-1);	
		
	\tikzmath{\yy = -11;}
	\draw[semithick, -latex] (5, \yy) -- (8,\yy);
	\node[scale=2] at (14.3, \yy -.55) {
		$
		\displaystyle \sum_{c, d}
		F^{aay'}_{b;\,yc} 
		\left[F^{aaa}_{c;\,y' d} \right]^{-1}
		R^{aa}_{d}
		$
	};	
	
	\coordinate (tlc) at (19.25, 3 + \yy);
	\fusiontree{4}{tlc}
	\innerlab{{\relax, \relax}}
	\coordinate (p) at ($(3.25,-4.5) + (tlc)$);
	\node at (p) [replace] {$c$};
	\coordinate (p) at ($(2.5,-2) + (tlc)$);
	\node at (p) [replace] {$d$};
	\filldraw[very thick, white] ($(2,0) + (tlc)$) -- ++($\ss*(mop)$);
	\draw[semithick] ($(2,0) + (tlc)$) -- ++($1.5*(m)$);	
	\filldraw[very thick, white] ($(3,0) + (tlc)$) -- ++($\ss*(m)$);
	\draw[semithick] ($(3,0) + (tlc)$) -- ++($0.5*(mop)$);	
	
	\tikzmath{\yy = 2*(-11);}
	\draw[semithick, -latex] (5, \yy) -- (8,\yy);
	\node[scale=2] at (15.6, \yy -.55) {
		$
		\displaystyle \sum_{c, d, x'}
		F^{aay'}_{b;\,yc} 
		\left[F^{aaa}_{c;\,y' d} \right]^{-1}
		R^{aa}_{d}
		F^{aaa}_{c;\,d x'}
		$
	};	
	
	\coordinate (tlc) at (21.5, 3 + \yy);
	\fusiontree{4}{tlc}
	\innerlab{{\relax, x'}}
	\node at ($(3.25,-4.5) + (tlc)$) [replace] {$c$};
	\filldraw[very thick, white] ($(2,0) + (tlc)$) -- ++($\ss*(mop)$);
	\draw[semithick] ($(2,0) + (tlc)$) -- ++($1.5*(m)$);	
	
	\tikzmath{\yy = 3*(-11);}
	\draw[semithick, -latex] (5, \yy) -- (8,\yy);
	\node[scale=2] at (18, -0.25 + \yy) {
		$
		\displaystyle \sum_{x, x'} \left[ 
		F^{aay'}_{b;\, yc} 
		\left[F^{aaa}_{c;\, y' d} \right]^{-1} \,
		R^{aa}_{d} \,
		F^{aaa}_{c;\, d x'}
		\left[F^{aa x'}_{b;\, cx} \right]^{-1}
		\right]
		$
	};	
	
	\fusiontree{4}{{27.5, 3+\yy}}
	\innerlab{{x,x'}}
\end{tikzpicture}
\caption{The action of $\sigma_2 \in B_4$ on $\Hom(a^4, b)$ with respect to our computational basis.}
\label{mid sig}
\end{center}	
\end{figure}

\begin{figure}
\begin{center}
\begin{tikzpicture}[scale=0.375]
	\braidedtree{6}{4}{{0,5}}
	\innerlab{{\ell^*, t_j, ,}}
	\node at ($(7.35, 1.75)$) [replace] {$t_{j+1}$};
	\node at ($(2.3, -1.75)$) [replace] {$\ell_{j-1}$};
			
	\draw[-latex] (9, 0) -- (12,0);
	\node[scale=1.5] at (15, -0.25) {
		$
		\displaystyle \sum_c
		F^{\ell^* t_{j} t_{j+1}}_{b^*;\, \ell_{j-1} c} 
		$
	};
	
	\coordinate (tlc) at (16.5, 5);
	\braidedtree{6}{4}{tlc}
	\innerlab{{\ell^*, \relax, \relax, \relax}}
	\filldraw[very thick, white] ($(4,-2) + (tlc)$) -- ++($\ss*1.5*(mop)$);
	\coordinate (tlcb) at ($(3,0) + (tlc)$);
	\braidedtree[blue!60]{4}{2}{tlcb}
	\filldraw[very thick, white] ($(2.5,-5) + (tlc)$) -- ++($\ss*(m)$);
	\draw[color=blue!60] ($(2.5,-5) + (tlc)$) -- ++($1.5*(mop)$);
	\node at (rootlab) [replace] {\relax};
	\innerlab{{t_j,,}}
	\node at ($(2.5,-6.5) + (tlc)$) [replace] {$c$};
	\node at ($(4.3,-3.3) + (tlc)$) [replace] {$t_{j+1}$};
	\node at ($(3.3,-7) + (tlc)$) [replace] {$ $};

	\tikzmath{\yy = -14;}
	\draw[-latex] (9, \yy) -- (12,\yy);
	\node[scale=1.5] at (18.5, -0.45+\yy) {
		$
		\displaystyle \sum_{c, t_j', t_{j+1}'}
		F^{\ell^* t_{j} t_{j+1}}_{b^*;\, \ell_{j-1} c} 
		\begingroup \color{blue!60} 
			\left(\sigma_2^{(4)}\right)_{\ket{t_j' t_{j+1}'}, \,\ket{t_j t_{j+1}}}
		\endgroup
		$
	};
	
	\coordinate (tlc) at (15 + 8, 5 + \yy);
	\fusiontree{6}{tlc}
	\innerlab{{\ell^*, \relax, \relax, \relax}}
	\filldraw[very thick, white] ($(4,-2) + (tlc)$) -- ++($\ss*1.5*(mop)$);
	\coordinate (tlcb) at ($(3,0) + (tlc)$);
	\braidedtree[blue!60]{4}{2}{tlcb}
	\filldraw[very thick, white] ($(2.5,-5) + (tlc)$) -- ++($\ss*(m)$);
	\draw[color=blue!60] ($(2.5,-5) + (tlc)$) -- ++($1.5*(mop)$);
	\node at (rootlab) [replace] {\relax};
	\innerlab{{t_j', ,}}
	\node at ($(2.75,-6) + (tlc)$) [replace] {$c$};
	\node at ($(4.25,-3.35) + (tlc)$) [replace] {$t_{j+1}'$};
	\node at ($(3.3,-7) + (tlc)$) [replace] {$ $};
	
	\tikzmath{\yy = 2*(-14);}
	\draw[-latex] (9, \yy) -- (12,\yy);
	\node[scale=1.5] at (22, -0.25 + \yy) {
		$
		\displaystyle \sum_{t_j', t_{j+1}', \ell_{j-1}'} \left[
		\sum_c
		F^{\ell^* t_{j} t_{j+1}}_{b^*;\, \ell_{j-1} c} 
		\left(\sigma_2^{(4)}\right)_{\ket{t_j' t_{j+1}'}, \,\ket{t_j t_{j+1}}}
		\left[F^{\ell^* t_j t_{j+1}}_{b^*; \, c f} \right]^{-1}
		\right]
		$
	};
	
	\fusiontree{6}{{31.5, 5+\yy}}
	\innerlab{{\ell^*, t_j', ,}}
	\node at ($(7.5,-3.3) + (tlc)$) [replace] {$t_{j+1}'$};
	\node at ($(2.4, -6.75) + (tlc)$) [replace] {$\ell_{j-1}'$};
\end{tikzpicture}
\end{center}
\caption{The action of an even-indexed braid group generator $\sigma_{2j}$ on $\Hom(a^m, b)$ with respect to our computational basis.}
\label{sig2j}
\end{figure}
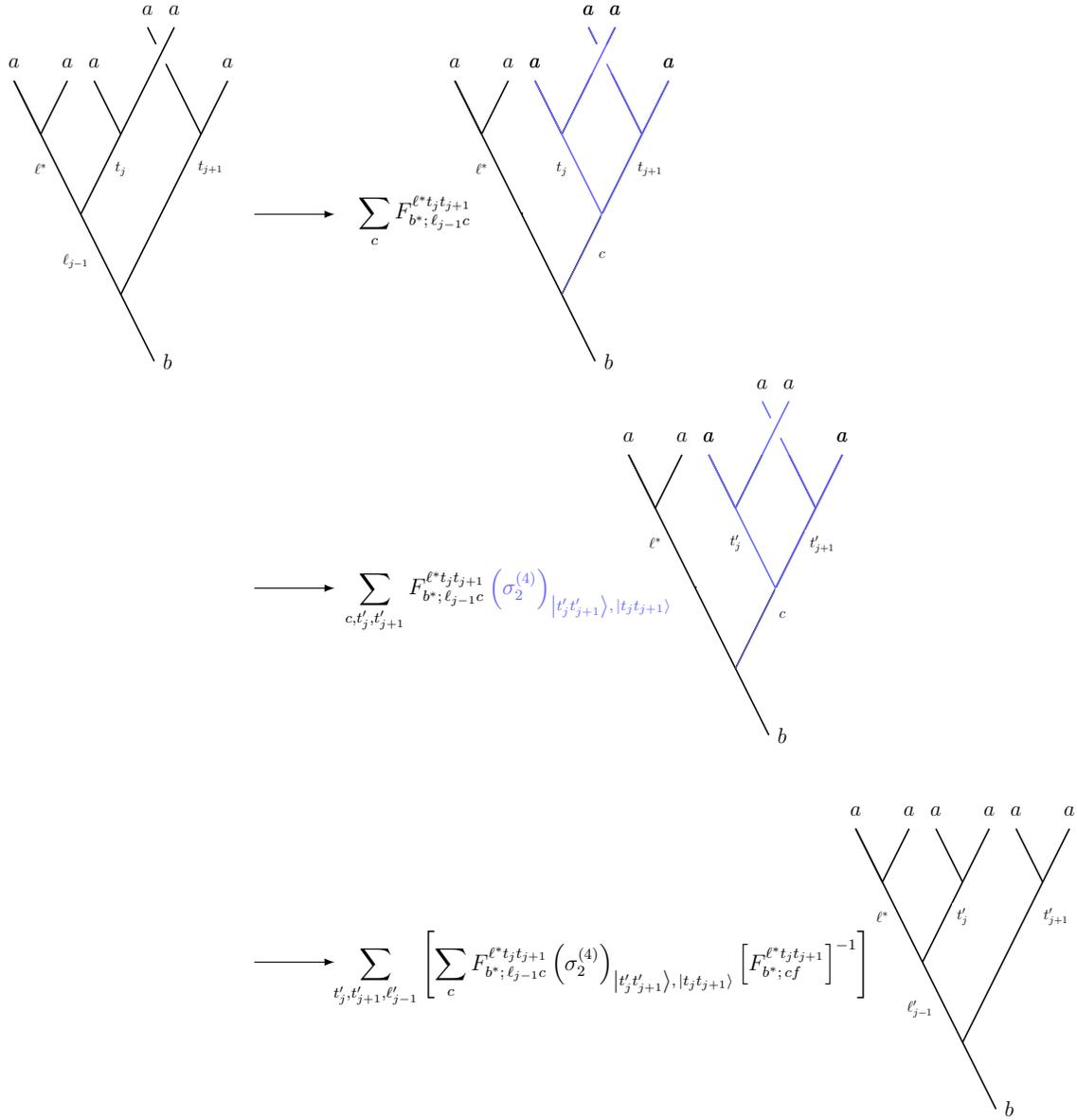

\begin{figure}
\begin{center}
\begin{tikzpicture}[scale=0.42]
	\braidedtree{5}{4}{{0,5}}
	\innerlab{{, t_r, }}
	\node at ($(.75,-3.3) + (tlc)$) [replace] {$\ell_{r-1}$};
	\node at ($(2.3,-6.3) + (tlc)$) [replace] {$\ell_{r-2}$};
		
	\draw[-latex] (8, 0) -- (11,0);
	\node[scale=2] at (14.5, -0.45) {
		$
		\displaystyle \sum_c
		F^{t_r a}_{b;\, \ell_{r-2} c} 
		$
	};
	
	\coordinate (tlc) at (17, 5);
	\braidedtree{5}{4}{tlc}
	\innerlab{{, \relax, \relax}}
	\filldraw[very thick, white] ($(4,-2) + (tlc)$) -- ++($1.5*\ss*(mop)$);
	\coordinate (tlcb) at ($(3,0) + (tlc)$);
	\braidedtree[blue!60]{3}{2}{tlcb}
	\innerlab{{t_j}}
	\node at (rootlab) [replace] {\relax};
	\filldraw[very thick, white] ($(2,-4) + (tlc)$) -- ++($1.5*\ss*(m)$);
	\filldraw[color=blue!60] ($(2,-4) + (tlc)$) -- ++($1.5*(mop)$);
	\node at ($(2,-5.5) + (tlc)$) [replace] {$c$};
	\node at ($(-2.3,-3.3) + (tlc)$) [replace] {$\ell_{r-1}$};	
	\node at ($(3.5,-7) + (tlc)$) [replace] {$ $};
	
	\tikzmath{\yy = -14;}
	\draw[-latex] (8, \yy) -- (11,\yy);
	\node[scale=2] at (17, -0.5 +\yy) {
		$
		\displaystyle \sum_{c, t_r'}
		F^{\ell_{r-1} t_r a}_{b;\, \ell_{r-2} c} 
		\begingroup \color{blue!60} 
			\left(\sigma_2^{(3)}\right)_{\ket{t_r'}, \,\ket{t_r}}
		\endgroup
		$
	};
	
	\coordinate (tlc) at (21, 5 + \yy);
	\fusiontree{5}{tlc}
	\innerlab{{, \relax, \relax}}
	\filldraw[very thick, white] ($(4,-2) + (tlc)$) -- ++($1.5*\ss*(mop)$);
	\coordinate (tlcb) at ($(3,0) + (tlc)$);
	\braidedtree[blue!60]{3}{2}{tlcb}
	\innerlab{{t_j'}}
	\node at (rootlab) [replace] {\relax};
	\filldraw[very thick, white] ($(2,-4) + (tlc)$) -- ++($1.5*\ss*(m)$);
	\filldraw[color=blue!60] ($(2,-4) + (tlc)$) -- ++($1.5*(mop)$);
	\coordinate (p) at ($(2,-5.5) + (tlc)$);
	\node at (p) [replace] {$c$};
    \node at ($(-2.3,-3.3) + (tlc)$) [replace] {$\ell_{r-1}$};
    \node at ($(3.5,-7) + (tlc)$) [replace] {$ $};
    
	\tikzmath{\yy = 2*(-14);}
	\draw[-latex] (8, \yy) -- (11,\yy);
	\node[scale=2] at (21, -0.25 + \yy) {
		$
		\displaystyle \sum_{t_r', \ell_{r-2}'} \left[
		\sum_c
		F^{\ell_{r-1} t_r a}_{b;\, \ell_{r-2} c} 
		\left(\sigma_2^{(3)}\right)_{\ket{t_r'}, \,\ket{t_r}}
		\left[F^{\ell_{r-1} t_r' a}_{b; \, c f} \right]^{-1}
		\right]
		$
	};
	
	\coordinate (tlc) at (30.25, 5 + \yy);
	\fusiontree{5}{tlc}
	\innerlab{{, t_r',}}
	\node at ($(.8,-3.3) + (tlc)$) [replace] {$\ell_{r-1}$};
    \node at ($(2.3,-6.3) + (tlc)$) [replace] {$\ell_{r-2}'$};
\end{tikzpicture}
\end{center}
\caption{The action of $\sigma_{2r}$ in $B_{2r+1}$ on $\Hom(a^m, b)$ when $m = 2r+1$.}
\label{odd one out}
\end{figure}
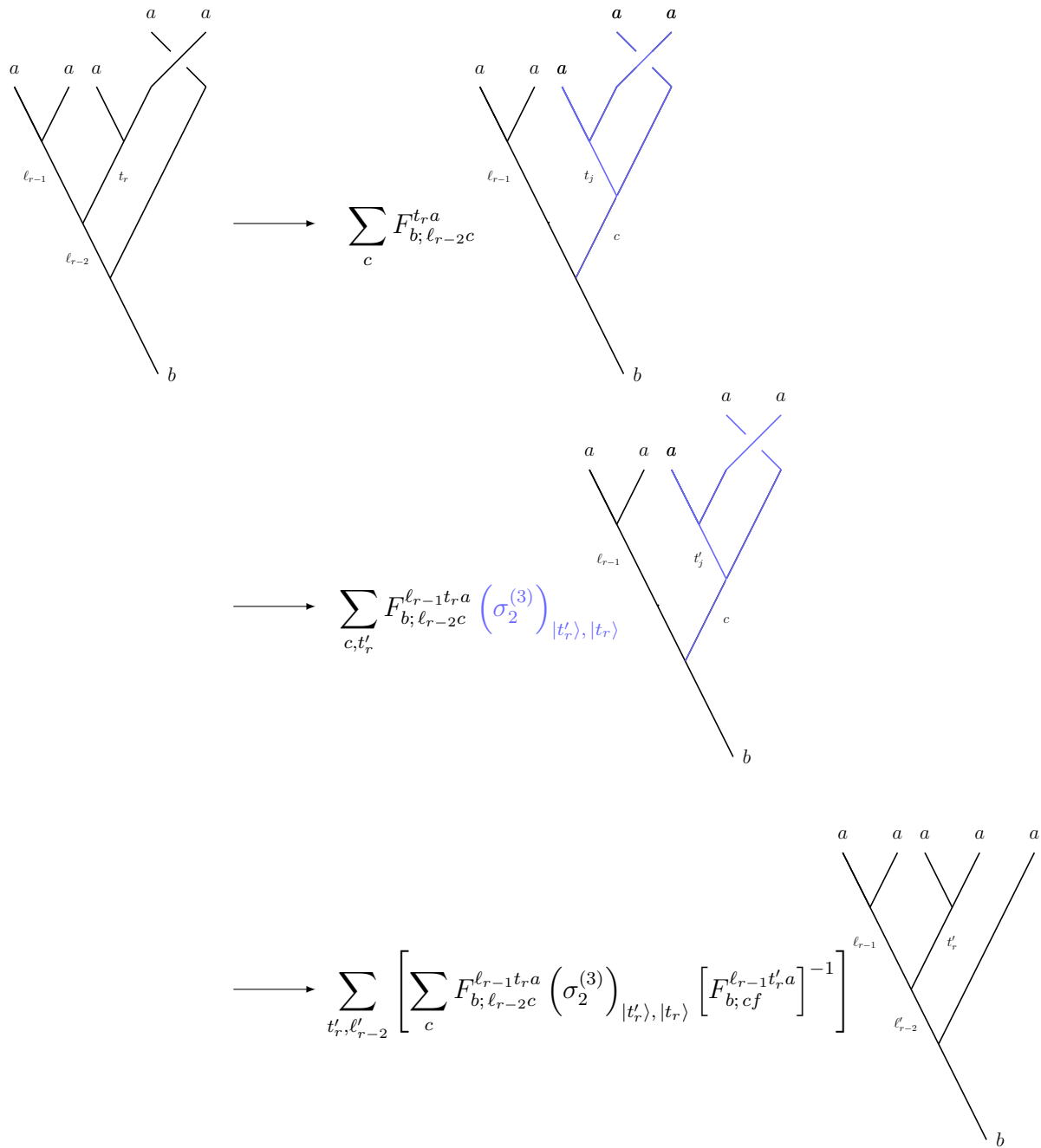


\clearpage
\bibliographystyle{alphaurl}
\bibliography{ref}
\end{document}